\newtheorem{theorem}{Theorem}
\newtheorem{lemma}[theorem]{Lemma}
\DeclareMathOperator*{\argmin}{arg\,min}
\newtheorem{asu}{Assumption}
\newcounter{subassumption}[asu]
\renewcommand{\thesubassumption}{(\textit{\roman{subassumption}})}
\renewcommand{\p@subassumption}{\theasu}
\begin{document}

\title{Unbiased Multilevel Monte Carlo: Stochastic Optimization, Steady-state Simulation, Quantiles, and Other Applications}
\author{Jose H. Blanchet\thanks{%
Department of Management Science and Engineering, Stanford University,
Stanford, CA 94305} \and Peter W. Glynn\footnotemark[1] \and Yanan Pei%
\thanks{%
Department of Industrial Engineering and Operations Research, Columbia
University, NY 10027}}
\date{}
\maketitle

\begin{abstract}
We present general principles for the design and analysis of unbiased Monte
Carlo estimators in a wide range of settings. Our estimators posses finite
work-normalized variance under mild regularity conditions. We apply our
estimators to various settings of interest, including unbiased optimization
in \textit{Sample Average Approximations}, unbiased steady-state simulation
of regenerative processes, quantile estimation and nested simulation
problems.
\end{abstract}


\section{Introduction}

In this paper, we propose simple and yet powerful techniques that can be
used to delete bias that often arise in the implementation of Monte Carlo
computations in a wide range of decision making and performance analysis
settings, for instance, stochastic optimization and quantile estimation,
among others.

There are two key advantages of the estimators that we will present.
Firstly, they can be easily implemented in the presence of parallel
computing processors, yielding estimates whose accuracy improves as the size
of available parallel computing cores increase while keeping the
work-per-processor bounded in expectation. Secondly, the confidence
intervals can be easily produced in settings in which variance estimators
might be difficult to obtain (e.g. in stochastic optimization problems whose
asymptotic variances depend on Hessian information).

To appreciate the advantage of parallel computing with bounded cost per
parallel processor, let us consider a typical problem in machine learning
applications, which usually involves a sheer amount of data. Because of the
technical issues that arise in using the whole data set for training, one
needs to resort techniques such as stochastic gradient descent, which is not
easy to fully run in parallel, or sample-average approximations, which can
be parallelized easily but it carries a systematic bias. For both the
optimal value function and the optimal policies, we provide estimators that
are unbiased, possess finite variance, and can be implemented in finite
expected termination time. Thus, our estimators can be directly implemented
in parallel with each parallel processor being assigned an amount of work
which is bounded in expectation.

A second example that we shall consider as an application of our techniques
arises in steady-state analysis of stochastic systems. A typical setting of
interest is to compute a long-term average of expected cost or reward for
running a stochastic system. This problem is classical in the literature of
stochastic simulation and it has been studied from multiple angles. Our
simple approach provides another way such that the steady-state analysis of
regenerative processes can be done without any bias. A key characteristic is
that the approach we study involves the same principle underlying the
stochastic optimization setting mentioned in the previous paragraph.

Other types of problems that we are able to directly address using our
methodology include computing unbiased estimators of quantiles and unbiased
estimators of nested expectations. In addition to being unbiased, all of the
estimators have finite work-normalized variance and can be simulated in
finite expected termination time, which makes their implementation in
parallel computation straightforward.

Applications such as stochastic optimization and quantile estimation allow
us to highlight the fact that the variance estimates of our Monte Carlo
estimators are straightforward to produce. These variance estimates are
important for us to generate asymptotically accurate confidence intervals.
In contrast, even though asymptotically unbiased estimators may be
available, sometimes these estimators require information about Hessians (as
in the optimization setting) or even density information (as in quantile
estimation applications) to produce accurate confidence intervals, while our
estimators do not require this type of information.

Our estimator relates to the multilevel-Monte Carlo method developed in \cite%
{Giles_2008}. We apply the debiasing techniques introduced in \cite%
{GlynnRhee_2015} and \cite{McLeish_2012}. Since the introduction of these
techniques, several improvements and applications have been studied, mostly
in the context of stochastic differential equations and partial differential
equations with random input, see for example \cite{GilesSzpruch_2014}, \cite%
{AgarwalGobet_2017}, \cite{KTH_2018} and \cite{MIMC_2018}.

In \cite{Vihola_2018}, a stratified sampling technique is introduced in
order to show that the debiasing in multi-level Monte Carlo can be achieved
virtually at no cost in either asymptotic efficiency or sample complexity
relative to the standard (biased) MLMC estimator. The results of \cite%
{Vihola_2018} can be applied directly to our estimators in order to improve
the variance, but the qualitative rate of convergence (i.e. O$\left(
1/\varepsilon ^{2}\right) $ remains the same).

A recent and independent paper \cite{GilesHA_2018} also studies the nested
simulation problem. While the techniques that we propose here are very
related to theirs, our framework is postulated in greater generality, which
involves not only nested simulation problems but other applications as
described earlier as well.

Another recent paper \cite{Dereich_2017} studies multi-level Monte Carlo in
the context of stochastic optimization, but their setting is different from
what we consider here and they give a completely different class of
algorithms which are not unbiased. Another recent paper \cite{Bujok_2015}
also develops optimal O($1/\varepsilon ^{2}$) sample complexity algorithms
for nested simulation problems, but they focus on specific settings and do
not develop a framework as general as what we consider here.

The rest of the paper is organized as follows. In Section \ref%
{sec:general-principle} we discuss the general principle which drives the
construction of our unbiased estimators. Then, we apply these principles to
the different settings of interest, namely, unbiased estimators for
non-linear functions of expectations, stochastic convex optimization,
quantile estimation and nested simulation problems, in later sections.

\section{The General Principles}

\label{sec:general-principle}

The general principles are based on the work of \cite{GlynnRhee_2015}.
Suppose that one is interested in estimating a quantity of the form $\theta
\left( \mu \right) \in \mathbb{R}$, where $\mu $ is a generic probability
distribution, say with support in a subset of $\mathbb{R}^{d}$, and $\theta
\left( \cdot \right) $ is a non-linear map.

A useful example to ground the discussion in the mind of the reader is $%
\theta \left( \mu \right) =g\left( E_{\mu }\left[ X\right] \right) $, where $%
g:\mathbb{R}^{d}\rightarrow \mathbb{R}$ is a given function (with regularity
properties which will be discussed in the sequel). We use the notation $%
E_{\mu }\left( \cdot \right) $ to denote the expectation operator under the
probability distribution $\mu $. For the sake of simplicity, we will later
omit the subindex $\mu $ when the context is clear.

We consider the empirical measure $\mu_n$ of iid samples $\left\{ X_{i}\in%
\mathbb{R}^d:1\le i\le n\right\} $, i.e.,
\begin{equation*}
\mu _{n}\left( dx\right) =\frac{1}{n}\sum_{i=1}^{n}\delta _{\left\{
X_{i}\right\} }\left( dx\right),
\end{equation*}
where $\delta_{X_i}(\cdot)$ is the point mass at $X_i$ for $i=1,\ldots,n$.
The sample complexity of producing $\mu _{n}\left( \cdot \right) $ is equal
to $n$. By the strong law of large numbers for empirical measures
(Varadarajan's Theorem), $\mu _{n}\rightarrow \mu $ almost surely in the
Prohorov space.

Under mild continuity assumptions, we have that
\begin{equation}
\theta \left( \mu _{n}\right) \rightarrow \theta \left( \mu \right)
\label{Aunb}
\end{equation}%
as $n\rightarrow \infty $ and often one might expect that $\theta \left( \mu
_{n}\right) $ is easy to compute. Then, $\theta \left( \mu _{n}\right) $
becomes a natural and reasonable estimator for $\theta \left( \mu \right) $.
However, there are several reasons that make it desirable to construct an
unbiased estimator with finite variance, say $Z$, for $\theta \left( \mu
\right) $; even if $\left\{ \theta \left( \mu _{n}\right) \right\} _{n\geq
1} $ is asymptotically normal in the sense that $n^{1/2}\left( \theta \left(
\mu _{n}\right) -\theta \left( \mu \right) \right) \Rightarrow \mathcal{N}%
\left( 0,\sigma _{\theta }^{2}\right) $ with some $\sigma _{\theta }^{2}>0$.
First, as we mentioned in the introduction, if one copy of $Z$ can be
produced in finite expected time, averaging the parallel replications of $Z$
immediately yields an estimate of $\theta \left( \mu \right) $, whose
accuracy then can be increased by the Central Limit Theorem as the number of
parallel replications of $Z$ increases. Second, the variance of $Z$ can be
estimated with the natural variance estimator of iid replications of $Z$,
when $\sigma _{\theta }^{2}$ may be difficult to evaluate from the samples
(e.g. if $\theta \left( \mu \right) $ represents some quantile of $\mu$).

Our goal is to construct a random variable $Z$ such that
\begin{equation*}
E\left[ Z\right] =\theta \left( \mu \right),\ Var\left( Z\right) <\infty,
\end{equation*}
and the expected sample complexity to produce $Z$ is bounded. To serve this
goal, we first construct a sequence of random variables $\left\{ \Delta
_{m}:m\ge1\right\}$ satisfying the following properties:

\begin{asu}
\label{general-principle-assumptions} General assumptions\newline

\refstepcounter{subassumption} \thesubassumption~\ignorespaces\label%
{asu-general1} There exists some $\alpha,c\in \left( 0,\infty \right)$ such
that $E\left[\left\vert \Delta _{m}\right\vert ^{2}\right] \leq c\cdot
2^{-(1+\alpha)m}$,\newline

\refstepcounter{subassumption} \thesubassumption~\ignorespaces\label%
{asu-general2} $\sum_{m=0}^{\infty }E\left[ \Delta _{m}\right] =\theta
\left( \mu\right) $,\newline

\refstepcounter{subassumption} \thesubassumption~\ignorespaces\label%
{asu-general3} If $C_{m}$ is the computational cost of producing one copy of
$\Delta _{m}$ (measured in terms of sampling complexity), then $E\left[ C_{m}%
\right] \leq c^{\prime }\cdot 2^{m}$ for some $c^{\prime }\in \left(
0,\infty \right) $.
\end{asu}

If we are able to construct the sequece $\left\{ \Delta _{m} :m\ge1 \right\}$
satisfying Assumption \ref{general-principle-assumptions}, then we can
construct an unbiased estimator for $Z$ as follows. First, sample $N$ from
geometric distribution with success parameter $r$, so that $p\left( k\right)
=P\left( N=k\right) =r\left( 1-r\right) ^{k}$ for $k\ge 0$. The parameter $%
r\in \left( 0,1\right) $ will be optimized shortly. At this point it
suffices to assume that $r\in \left( \frac{1}{2},1-\frac{1}{2^{(1+\alpha)}}%
\right) $.

Once the distribution of $N$ has been specified, the estimator that we
consider takes the form
\begin{equation}  \label{general-estimator}
Z=\frac{\Delta _{N}}{p\left( N\right) },
\end{equation}
where $N$ is independent of the iid sequence $\left\{ \Delta _{m}\right\}
_{m=1}^{\infty }$. Note that the estimator possess finite variance because $%
r<1-\frac{1}{2^{(1+\alpha)}}$,
\begin{eqnarray}
E\left[ Z^{2}\right] &=&\sum_{k=0}^{\infty }E\left[ Z^{2}\mid N=k\right]
p\left( k\right) =\sum_{k=0}^{\infty }E\left[ \frac{\Delta _{k}^{2}}{%
p\left(k\right)^2}\mid N=k\right]p\left( k\right)  \label{eA} \\
&=&\sum_{k=0}^{\infty}\frac{E\left[ \Delta _{k}^{2}\right]}{p\left( k\right)
} \leq c\sum_{k=0}^{\infty }\frac{2^{-(1+\alpha)k}}{p\left( k\right) }=\frac{%
c}{r}\sum_{k=0}^{\infty }\frac{1}{\left( 2^{(1+\alpha)}\left( 1-r\right)
\right) ^{k}}<\infty .  \notag
\end{eqnarray}%
Moreover, the unbiasedness of the estimator is ensured by Assumption \ref%
{asu-general2},
\begin{equation*}
E\left[ Z\right] =\sum_{k=0}^{\infty }E\left[ Z\mid N=k\right] p\left(
k\right) =\sum_{k=0}^{\infty }E\left[ \frac{\Delta _{k}}{p\left( k\right) }%
\right] p\left( k\right) =\theta \left( \mu \right) .
\end{equation*}%
Finally, because $r>1/2$, the expected sampling complexity of producing $Z$,
denoted by $C$, is finite precisely by Assumption \ref{asu-general3},%
\begin{equation}
E\left[C\right]=E\left[ C_{N}\right] \leq c^{\prime }\sum_{k=0}^{\infty
}2^{k}p\left( k\right) =rc^{\prime }\sum_{k=0}^{\infty }\left( 2\left(
1-r\right) \right) ^{k}<\infty .  \label{eC}
\end{equation}%
In \cite{GlynnRhee_2015} the choice of $N$ is optimized in terms of the $E%
\left[ \Delta _{m}^{2}\right] $ and $E\left[ C_{m}\right] $. In \cite%
{BlanGlynn:2015} a bound on the work-normalized variance, namely the product%
\begin{equation*}
\sum_{k=0}^{\infty }\frac{2^{-(1+\alpha)k}}{p\left( k\right) }\times
\sum_{k=0}^{\infty }2^{k}p\left( k\right) ,
\end{equation*}%
corresponding to the bounds in the right hand side of (\ref{eA}) and (\ref%
{eC}) is optimized, and the resulting optimal choice of $p\left( k\right) $%
's corresponds to choosing $N$ geometrically distributed with $r=1-2^{-3/2}$
when $\alpha=1$. Following the same logic, the optimal choice of $N$ should
be geometrically distributed with $r=1-2^{-(1+\alpha/2)}$ for the general $%
\alpha>0$ case and we advocate this choice for the construction of $Z$.

The contribution of our work is to study the construction of the $\Delta
_{m} $'s based on the sequence $\left\{ \mu _{n}:n\geq 1\right\} $
satisfying Assumption \ref{general-principle-assumptions} as we\ now
explain. Now our focus is on explaining the high-level ideas at an informal
level and provide formal assumptions later for different settings.

Suppose that there exists a function $T_{\mu }^{\theta }:\mathbb{R}%
^{d}\rightarrow \mathbb{R}$ such that
\begin{equation*}
\left. \frac{d}{dt}\theta \left( \mu +t\left( \mu _{n}-\mu \right) \right)
\right\vert _{t=0}=\int T_{\mu }^{\theta }\left( x\right) d\left( \mu
_{n}-\mu \right) =E_{\mu _{n}}\left[ T_{\mu }^{\theta }\left( X\right) %
\right] -E_{\mu }\left[ T_{\mu }^{\theta }\left( X\right) \right].
\end{equation*}%
Typically, $T_{\mu }^{\theta }\left( \cdot \right) $ corresponds to the
Riesz representation (if it exists) of the derivative of $\theta \left(
\cdot \right) $ at $\mu $. Going back to the case in which $\theta \left(
\mu \right) =g\left( E_{\mu }\left[ X\right] \right) $, assuming that $%
g\left( \cdot \right) $ is differentiable with derivative $Dg\left( \cdot
\right) $, we have
\begin{equation*}
\left. \frac{d}{dt}g\left( E_{\mu }\left[ X\right] +t\left( E_{\mu _{n}}%
\left[ X\right] -E_{\mu }\left[ X\right] \right) \right) \right\vert
_{t=0}=Dg\left( E_{\mu }\left[ X\right] \right) \cdot \left( E_{\mu _{n}}%
\left[ X\right] -E_{\mu }\left[ X\right] \right) ,
\end{equation*}%
so in this setting $T_{\mu }^{\theta }\left( x\right) =Dg\left( \int z\mu
\left( dz\right) \right) \cdot x=Dg\left( E_{\mu }\left[ X\right] \right)
\cdot x$.

Now, suppose that $\theta \left( \cdot \right) $ is smooth in the sense that
\begin{equation}
\theta \left( \mu \right) =\theta \left( \mu _{n}\right) +\left( E_{\mu }
\left[ T_{\mu }^{\theta }\left( X\right) \right] -E_{\mu _{n}}\left[ T_{\mu
}^{\theta }\left( X\right) \right] \right) +\epsilon \left( n,\mu
_{n}\right) ,  \label{Smooth}
\end{equation}%
where
\begin{equation*}
\left\vert \epsilon \left( n,\mu _{n}\right) \right\vert =O_{p}\left( \left|
E_{\mu }\left[ T_{\mu }^{\theta }\left( X\right) \right] -E_{\mu _{n}}\left[
T_{\mu }^{\theta }\left( X\right) \right] \right| ^{2}\right) .
\end{equation*}%
Basically, the term $\epsilon \left( n,\mu _{n}\right) $ controls the error
of the first order Taylor expansion of the map
\begin{equation*}
t\hookrightarrow \theta \left( \mu +t\left( \mu _{n}-\mu \right) \right)
\end{equation*}
around $t=0$. So, in the context in which $\theta \left( \mu \right)
=g\left( E_{\mu }\left[ X\right] \right) $, if $g\left( \cdot \right) $ is
twice continuously differentiable, then we have
\begin{eqnarray*}
&&\epsilon \left( n,\mu _{n}\right) \\
&=&\frac{1}{2}\left( E_{\mu _{n}}\left[ X\right] -E_{\mu }\left[ X\right) %
\right] ^{T}\cdot \left( D^{2}g\right) \left( E_{\mu }\left[ X\right]
\right) \cdot \left( E_{\mu _{n}}\left[ X\right] -E_{\mu }\left[ X\right]
\right) +o\left( 1\right) ,
\end{eqnarray*}%
as $n\rightarrow \infty $.

The key ingredient in the construction of the sequence $\left\{ \Delta
_{m}:m\geq 1\right\}$ is an assumption of the form 
\begin{equation}
\sup_{n\geq 1}n^{2} E\left[\left| E_{\mu }\left[ T_{\mu }^{\theta }\left(
X\right) \right] -E_{\mu _{n}}\left[ T_{\mu }^{\theta }\left( X\right) %
\right] \right| ^{4}\right] <\infty ,  \label{L2Bnd}
\end{equation}%
this assumption will typically be followed as an strengthening of a Central
Limit Theorem companion to the limit $\mu _{n}\rightarrow \mu $ as $%
n\rightarrow \infty $, which would typically yield 
\begin{equation*}
n^{1/2}\left\{ E_{\mu }\left[ T_{\mu }\left( X\right) \right] -E_{\mu _{n}}%
\left[ T_{\mu }\left( X\right) \right] \right\} \Longrightarrow W,
\end{equation*}%
as $n\rightarrow \infty $ for some $W$. Under (\ref{L2Bnd}) the construction
of $\Delta _{n}$ satisfying Assumption \ref{asu-general1} proceeds as
follows. Let
\begin{equation*}
\mu _{2^{n}}^{E}\left( dx\right) =\frac{1}{2^n}\sum_{i=1}^{2^{n}}\delta
_{\{X_{2i}\}}\left( dx\right),~~~~\mu _{2^{n}}^{O}\left( dx\right) =\frac{1}{%
2^n}\sum_{i=1}^{2^{n}}\delta _{\{X_{2i-1}\}}\left( dx\right)
\end{equation*}
and set for $n\geq 1$,
\begin{equation}
\Delta _{n}=\theta \left( \mu _{2^{n+1}}\right) -\frac{1}{2}\left(\theta
\left( \mu _{2^{n}}^{E}\right) +\theta \left( \mu _{2^{n}}^{O}\right)
\right).  \label{Estimator}
\end{equation}%
The key property behind the construction for $\Delta _{n}$ in (\ref%
{Estimator}) is that
\begin{equation*}
\mu _{2^{n+1}}=\frac{1}{2}\left( \mu _{2^{n}}^{E}+\mu _{2^{n}}^{O}\right) ,
\end{equation*}%
so a linearization of $\theta \left( \mu \right) $ will cancel the first
order effects implied in approximating $\mu $ by $\mu_{2^{n+1}}$,$\mu
^O_{2^{n}}$ and $\mu^E_{2^n}$. In particular, using (\ref{Smooth}) directly
we have that
\begin{equation*}
\left\vert \Delta _{n}\right\vert \leq \left\vert \epsilon \left(
2^{n+1},\mu _{2^{n+1}}\right) \right\vert +\left\vert \epsilon \left(
2^{n},\mu _{2^{n}}^{O}\right) \right\vert +\left\vert \epsilon \left(
2^{n},\mu _{2^{n}}^{E}\right) \right\vert ,
\end{equation*}%
consequently due to (\ref{L2Bnd}) we have that
\begin{equation}
E\left[ \left\vert \Delta _{n}\right\vert ^{2}\right] =O\left(
2^{-2n}\right) .  \label{KeyB2}
\end{equation}%
Once (\ref{KeyB2}) is in place, verification of Assumption \ref{asu-general2}
is straightforward because
\begin{equation*}
E\left[ \Delta _{n}\right] =E\left[ \theta \left( \mu _{2^{n+1}}\right) %
\right] -E\left[ \theta \left( \mu _{2^{n}}\right) \right] ,
\end{equation*}%
so if we define
\begin{equation*}
\Delta _{0}=\theta \left( \mu _{2}\right) ,
\end{equation*}%
then
\begin{equation*}
\sum_{n=0}^{\infty }E\left( \Delta _{n}\right) =\theta \left( \mu \right) .
\end{equation*}%
Assumption \ref{asu-general3} follows directly because the sampling
complexity required to produce $\Delta _{m}$ is $C_{m}=2^{m+1}$ (assuming
each $X_{i}$ required a unit of sample complexity).

The rest of the paper is dedicated to the analysis of (\ref{Estimator}). The
abstract approach described here, in terms of the derivative of $\theta
\left( \mu \right)$, sometimes is cumbersome to implement under the
assumptions that are natural in the applications of interest (for example
stochastic optimization). So, we may study the error in (\ref{Estimator})
directly in later applications, but we believe that keeping the high-level
intuition described here is useful to convey the generality of the main
ideas.

\section{Non-linear functions of expectations and applications}

\label{sec:function}

\label{sec:functions-expecations}

We first apply the general principle to the canonical example considered in
our previous discussion, namely
\begin{equation*}
\theta (\mu )=g\left( \int yd\mu (y)\right) =g\left( E_{\mu }\left[ X\right]
\right) .
\end{equation*}%
Let $\nu=E_{\mu}[X]$. We will impose natural conditions on $g\left( \cdot
\right) $ to make sure that the principles discussed in Section \ref%
{sec:general-principle} can be directly applied.

We use $(X_k:k\ge1)$ to denote an iid sequence of copies of the random
variable $X\in\mathbb{R}^d$ from distribution $\mu$. For $k\ge1$, we define
\begin{equation*}
X_k^O=X_{2k-1}~~~~\mbox{and}~~~~X_k^E=X_{2k}.
\end{equation*}
Note that the $X^O$'s correspond to $X_k$'s indexed by odd values and the $%
X^E$'s correspond to the $X_k$'s indexed by even values. For $k\in\mathbb{N}%
_+$, let
\begin{equation*}
S_k=X_1+\ldots+X_k
\end{equation*}
and similarly let
\begin{align*}
S_k^O&=X_1^O+\ldots+X_k^O, \\
S_k^E&=X_1^E+\ldots+X_k^E.
\end{align*}

In this setting, we may define%
\begin{equation*}
\Delta _{n}=g\left( \frac{S_{2^{n+1}}}{2^{n+1}}\right) -\frac{1}{2}\left(
g\left( \frac{S_{2^{n}}^{O}}{2^{n}}\right) +g\left( \frac{S_{2^{n}}^{E}}{%
2^{n}}\right) \right)
\end{equation*}%
for $n\geq 0$ and let the estimator to be
\begin{equation}
Z=\frac{\Delta _{N}}{p\left( N\right) }+g\left( X_1\right) ,
\end{equation}%
where $N$ was defined in Section \ref{sec:general-principle}.

We now impose precise assumptions on $g\left( \cdot \right) $, so that
Assumption \ref{general-principle-assumptions} can be verified for $\Delta
_{n}$. We summarize our discussion in Theorem \ref{thm-function-expectation}
next.

\begin{theorem}
\label{thm-function-expectation}Suppose that the following assumptions are
forced:

\begin{enumerate}
\item Suppose that $g:\mathbb{R}^d\rightarrow\mathbb{R}$ has linear growth
of the form $\lvert g(x)\rvert\le c_1\left(1+\lVert x\rVert_2\right)$ for
some $c_1>0$, where $\lVert\cdot\rVert_2$ denotes the $l_2$ norm in
Euclidian space, \label{function-thm-1}

\item Suppose $g$ is continuously differentiable in a neighborhood of $%
\nu=E[X]$, and $Dg(\cdot)$ is locally Holder continuous with exponent $%
\alpha>0$, i.e.,

\begin{equation*}
\left\Vert Dg(x)-Dg(y)\right\Vert _{2}\leq \kappa (x)\left\Vert
x-y\right\Vert _{2}^{\alpha },
\end{equation*}%
where $\kappa \left( \cdot \right) $ is bounded on compact sets \label%
{function-thm-2}


\item $X$ has finite $3(1+\alpha )$ moments, i.e. $E\left[ \lVert X\rVert
_{2}^{3(1+\alpha )}\right] <\infty $. \label{function-thm-3}
\end{enumerate}

Then, $E\left[Z\right]=g\left( E\left[ X\right] \right) $, $Var\left(
Z\right) <\infty $ and the sampling complexity required to produce $Z$ is
bounded in expectation.
\end{theorem}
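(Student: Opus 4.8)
The plan is to verify the three clauses of Assumption~\ref{general-principle-assumptions} for the sequence $\{\Delta_n\}_{n\ge 0}$ written down just above (with $\Delta_0=g(S_2/2)-\tfrac12(g(X_1)+g(X_2))$); once that is in place, the three assertions of the theorem are read off from the computations \eqref{eA}, \eqref{eC} and the unbiasedness identity of Section~\ref{sec:general-principle}, after one accounts for the extra summand $g(X_1)$ in $Z$. That summand costs one additional sample, it adds $Var(g(X_1))\le E[g(X_1)^2]\le 2c_1^2(1+E\lVert X\rVert_2^2)<\infty$ to the variance (finite since $3(1+\alpha)>2$), and it shifts the mean by $E[g(X_1)]$, which is exactly what is needed because $\Delta_0$ has been chosen here so that $\sum_{n\ge0}E[\Delta_n]=g(\nu)-E[g(X_1)]$ rather than $g(\nu)$.

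Two of the three clauses are quick. Clause \ref{asu-general3} is immediate: producing $\Delta_n$ uses precisely the samples $X_1,\dots,X_{2^{n+1}}$, so $C_n=2^{n+1}$. For clause \ref{asu-general2} I would observe that $S_{2^n}^O$ and $S_{2^n}^E$ are each sums of $2^n$ iid copies of $X$, whence $E[g(S_{2^n}^O/2^n)]=E[g(S_{2^n}^E/2^n)]=E[g(S_{2^n}/2^n)]$ and therefore $E[\Delta_n]=E[g(S_{2^{n+1}}/2^{n+1})]-E[g(S_{2^n}/2^n)]$ for all $n\ge0$ (reading $S_1/1=X_1$ at $n=0$); the partial sums telescope to $E[g(S_{2^{M+1}}/2^{M+1})]-E[g(X_1)]$, and since $S_{2^{M+1}}/2^{M+1}\to\nu$ a.s.\ by the strong law while Assumption~\ref{function-thm-1} together with $\sup_M E\lVert S_{2^{M+1}}/2^{M+1}\rVert_2^{3(1+\alpha)}\le E\lVert X\rVert_2^{3(1+\alpha)}<\infty$ (Jensen) supply uniform integrability, one may pass to the limit and conclude.

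The heart of the matter is clause \ref{asu-general1}. Writing $\bar X=S_{2^{n+1}}/2^{n+1}$, $\bar X^O=S_{2^n}^O/2^n$, $\bar X^E=S_{2^n}^E/2^n$ and $D=\bar X^O-\bar X^E$, the exact identities $\bar X=\tfrac12(\bar X^O+\bar X^E)$, $\bar X^{O}=\bar X+\tfrac12 D$ and $\bar X^{E}=\bar X-\tfrac12 D$ make $\Delta_n=g(\bar X)-\tfrac12 g(\bar X+\tfrac12 D)-\tfrac12 g(\bar X-\tfrac12 D)$ a symmetric second difference, so the first-order part cancels and only a Hölder remainder survives. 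On the good event $G_n=\{\lVert\bar X^O-\nu\rVert_2\le\epsilon\}\cap\{\lVert\bar X^E-\nu\rVert_2\le\epsilon\}$, where $\epsilon$ is fixed so small that the closed ball $\bar B(\nu,\epsilon)$ lies inside the neighbourhood of differentiability of $g$ (so $\bar X\in\bar B(\nu,\epsilon)$ as well), the fundamental theorem of calculus applied to $g(\bar X\pm\tfrac12 D)-g(\bar X)$ together with Assumption~\ref{function-thm-2} gives $|\Delta_n|\le K\lVert D\rVert_2^{1+\alpha}$ with $K=\sup_{\bar B(\nu,\epsilon)}\kappa<\infty$; since $D$ is an average of $2^n$ iid mean-zero vectors with finite $2(1+\alpha)$-th moment, Marcinkiewicz--Zygmund gives $E\lVert D\rVert_2^{2(1+\alpha)}=O(2^{-(1+\alpha)n})$ and hence $E[\Delta_n^2\,\mathbf 1_{G_n}]=O(2^{-(1+\alpha)n})$. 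On $G_n^c$ I would instead fall back on linear growth, $|\Delta_n|\le c_1(2+\lVert\bar X\rVert_2+\lVert\bar X^O\rVert_2+\lVert\bar X^E\rVert_2)$, so that (by Jensen) $E[|\Delta_n|^{3(1+\alpha)}]$ is bounded uniformly in $n$; Markov's inequality and Marcinkiewicz--Zygmund give $P(G_n^c)=O(\epsilon^{-3(1+\alpha)}2^{-3(1+\alpha)n/2})$, and Hölder's inequality with conjugate exponents $\tfrac{3(1+\alpha)}{2}$ and $\tfrac{3(1+\alpha)}{3\alpha+1}$ then yields $E[\Delta_n^2\,\mathbf 1_{G_n^c}]=O(2^{-(3\alpha+1)n/2})$. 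Adding the two contributions shows that $E[\Delta_n^2]$ decays geometrically — at rate $2^{-(1+\alpha)n}$ when $\alpha\ge1$, and at the still-positive rate $\min\{1+\alpha,(3\alpha+1)/2\}$ in general — which is enough to run the construction of Section~\ref{sec:general-principle} with $r$ taken from the corresponding admissible interval.

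The step I expect to be delicate is precisely the estimate on $G_n^c$: away from $\nu$ there is no smoothness to exploit, so one must balance the small probability of a large deviation of an empirical mean against the potentially large value of $|\Delta_n|$ there, and it is this balance that the moment hypothesis of Assumption~\ref{function-thm-3} pays for — enough moments that the Hölder--Markov bound on $E[\Delta_n^2\mathbf 1_{G_n^c}]$ decays at least as fast as the $O(2^{-(1+\alpha)n})$ coming from $G_n$. If the crude Marcinkiewicz--Zygmund tail bound does not make the exponents close for small $\alpha$, the natural remedy is to sharpen $P(\lVert\bar X-\nu\rVert_2>\epsilon)$ with a Nagaev-type inequality, or to let $\epsilon=\epsilon_n$ vary with $n$; making this trade-off work cleanly under exactly the stated moment assumption is the one part of the argument that requires genuine care.
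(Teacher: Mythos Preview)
Your overall architecture matches the paper's: telescoping plus uniform integrability for unbiasedness, the symmetric second--difference cancellation on the good event $G_n$ with the H\"older continuity of $Dg$ and a Marcinkiewicz--Zygmund moment bound, and the trivial cost count. On $G_n$ the paper does the same thing, writing $\Delta_n=\tfrac14\bigl(Dg(\xi_n^O)-Dg(\xi_n^E)\bigr)^T(S_{2^n}^E-S_{2^n}^O)/2^n$ via the mean value theorem and then invoking the same $O(2^{-(1+\alpha)n})$ estimate.

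The genuine gap is on $G_n^c$. Your H\"older--Markov route gives $E[\Delta_n^2\mathbf{1}_{G_n^c}]=O\bigl(2^{-(3\alpha+1)n/2}\bigr)$, but the construction of Section~\ref{sec:general-principle} needs a decay exponent \emph{strictly greater than $1$} (otherwise the interval $(1/2,1-2^{-(1+\alpha')})$ for $r$ is empty). Since $(3\alpha+1)/2>1$ only when $\alpha>1/3$, your argument as written does not establish finite work-normalized variance for $0<\alpha\le 1/3$; ``still-positive rate'' is not enough. You correctly sense this in your last paragraph, but the remedy is not optional---for small $\alpha$ the crude Markov tail must be replaced by something sharper.

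The paper closes exactly this gap, and it is worth seeing how, because it is essentially the Fuk--Nagaev idea you allude to, carried out by hand. One introduces a truncation level $2^{n(1-\delta')}$ and lets $N_n$ count how many of the $2^n$ increments exceed it. On $\{N_n=0\}$ the centered sum has bounded increments, so a Chernoff bound makes $P(\|\bar X^O-\nu\|>\delta/2,\,N_n=0)$ superpolynomially small, and H\"older against $E|\Delta_n|^{2(1+\alpha)}$ gives $o(2^{-(1+\alpha)n})$. For each fixed $k\ge 1$ one bounds $E[\Delta_n^2\mathbf{1}_{N_n=k}]$ directly using the tail of a single increment and the $3(1+\alpha)$ moment hypothesis, obtaining $O(2^{-2n})$ for $\delta'$ small. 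Finally $P(N_n\ge m)$ decays fast enough that, for $m$ chosen large depending only on $\alpha$, H\"older handles the remaining tail. The upshot is $E[\Delta_n^2\mathbf{1}_{G_n^c}]=O(2^{-2n})$ \emph{uniformly in $\alpha>0$}, which is comfortably inside Assumption~\ref{asu-general1}. This ``one-big-jump'' decomposition is precisely what the $3(1+\alpha)$ moment hypothesis is paying for, and it is the piece of the proof you should expect to have to write out rather than gesture at.
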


\begin{proof}
We first show the unbiasedness of the estimator $Z$. From \ref%
{function-thm-1} we have that
\begin{equation*}
\left| g\left(S_n/n\right)\right|^2\le c_1^{\prime }\left(1+\lVert
S_n/n\rVert_2^{2}\right).
\end{equation*}
And because of \ref{function-thm-3} we have $E\left[g(S_n/n)^2\right]%
<\infty, $ which implies that $g(S_n/n)$ is uniformly integrable. For each $%
n\ge0$,
\begin{equation*}
E\left[\Delta_n\right]=E\left[g\left(S_{2^{n+1}}/2^{n+1}\right)\right]-E%
\left[g\left(S_{2^n}/2^n\right)\right].
\end{equation*}
With the condition in \ref{function-thm-2} that $g$ is continuous in a
neighborhood of $\nu$, we derive
\begin{align*}
E[Z]&=E\left[\frac{\Delta_N}{p(N)}\right]+E\left[g(X_1)\right]%
=\sum_{n=1}^{\infty}E\left[\Delta_n\right]+E\left[g(X_1)\right] \\
&=\lim_{n\to\infty}E\left[g\left(S_{2^n}/2^n\right)\right]=E\left[%
\lim_{n\to\infty}g\left(S_{2^n}/2^n\right)\right]=g(E[X]).
\end{align*}

Next we show $E\left[ \Delta _{n}^{2}\right] =O\left( 2^{-(1+\alpha
)n}\right) $ for all $n\geq 0$. We pick $\delta >0$ small enough so that $%
g(\cdot )$ is continuously differentiable in a neighborhood of size $\delta $
around $\nu $ and the locally Holder continuous condition holds as well.
\begin{align*}
\left\vert \Delta _{n}\right\vert =& \left\vert \Delta _{n}\right\vert
I\left( \max \left( \left\Vert S_{2^{n}}^{O}/2^{n}-\nu \right\Vert
_{2},\left\Vert S_{2^{n}}^{E}/2^{n}-\nu \right\Vert _{2}\right) >\delta
/2\right) \\
& +\left\vert \Delta _{n}\right\vert I\left( \left\Vert
S_{2^{n}}^{O}/2^{n}-\nu \right\Vert _{2}\leq \delta /2,\left\Vert
S_{2^{n}}^{E}/2^{n}-\nu \right\Vert _{2}\leq \delta /2\right) \\
\leq & \left\vert \Delta _{n}\right\vert I\left( \left\Vert
S_{2^{n}}^{O}/2^{n}-\nu \right\Vert _{2}>\delta /2\right) +\left\vert \Delta
_{n}\right\vert I\left( \left\Vert S_{2^{n}}^{E}/2^{n}-\nu \right\Vert
_{2}>\delta /2\right) \\
& +\left\vert \Delta _{n}\right\vert I\left( \left\Vert
S_{2^{n}}^{O}/2^{n}-\nu \right\Vert _{2}\leq \delta /2,\left\Vert
S_{2^{n}}^{E}/2^{n}-\nu \right\Vert _{2}\leq \delta /2\right) .
\end{align*}%
When $\left\Vert S_{2^{n}}^{O}/2^{n}-\nu \right\Vert _{2}\leq \delta /2$ and
$\left\Vert S_{2^{n}}^{E}/2^{n}-\nu \right\Vert _{2}\leq \delta /2$, we have
$\left\Vert S_{2^{n+1}}/2^{n+1}-\nu \right\Vert _{2}\leq \delta $ and $%
\left\Vert S_{2^{n}}^{O}/2^{n}-S_{2^{n}}^{E}/2^{n}\right\Vert _{2}\leq
\delta $, thus 
\begin{align*}
\Delta _{n}& =\frac{1}{2}\left( g\left( S_{2^{n+1}}/2^{n+1}\right) -g\left(
S_{2^{n}}^{O}/2^{n}\right) \right) +\frac{1}{2}\left( g\left(
S_{2^{n+1}}/2^{n+1}\right) -g\left( S_{2^{n}}^{E}/2^{n}\right) \right) \\
& =\frac{1}{4}Dg\left( \xi _{n}^{O}\right) ^{T}\frac{%
S_{2^{n}}^{E}-S_{2^{n}}^{O}}{2^{n}}+\frac{1}{4}Dg\left( \xi _{n}^{E}\right)
^{T}\frac{S_{2^{n}}^{O}-S_{2^{n}}^{E}}{2^{n}} \\
& =\frac{1}{4}\left( Dg\left( \xi _{n}^{O}\right) -Dg\left( \xi
_{n}^{E}\right) \right) ^{T}\frac{S_{2^{n}}^{E}-S_{2^{n}}^{O}}{2^{n}},
\end{align*}%
where $\xi _{n}^{O}$ is some value between $S_{2^{n}}^{O}/2^{n}$ and $%
S_{2^{n+1}}/2^{n+1}$, and $\xi _{n}^{E}$ is some value between $%
S_{2^{n}}^{E}/2^{n}$ and $S_{2^{n+1}}/2^{n+1}$. It is not hard to see that
\begin{equation*}
\left\Vert \xi _{n}^{O}-\xi _{n}^{E}\right\Vert _{2}=\left\Vert \frac{%
U_{n}^{O}+U_{n}^{E}}{2}\cdot \left( \frac{S_{2^{n}}^{O}}{2^{n}}-\frac{%
S_{2^{n}}^{E}}{2^{n}}\right) \right\Vert _{2}\leq \left\Vert \frac{%
S_{2^{n}}^{E}}{2^{n}}-\frac{S_{2^{n}}^{O}}{2^{n}}\right\Vert _{2}.
\end{equation*}%
Hence, using the fact that $\kappa \left( \cdot \right) $ is bounded on
compact sets, we have that there exists a deterministic constant $c\in
\left( 0,\infty \right) $ (depending on $\delta $) such that%
\begin{align*}
& E\left( \left\vert \Delta _{n}\right\vert ^{2}I\left( \left\Vert
S_{2^{n}}^{O}/2^{n}-\nu \right\Vert _{2}\leq \delta /2,\left\Vert
S_{2^{n}}^{E}/2^{n}-\nu \right\Vert _{2}\leq \delta /2\right) \right) \\
\leq & \ cE\left( \left\Vert \frac{S_{2^{n}}^{O}-S_{2^{n}}^{E}}{2^{n}}%
\right\Vert _{2}^{2(1+\alpha )}\right) =O\left( 2^{-(1+\alpha )n}\right) ,
\end{align*}%
where the last estimate follows from \cite{Bahr:1965}.

On the other hand, in order to analyze,
\begin{equation}
E\left[ \lvert \Delta _{n}\rvert ^{2}I\left( \left\Vert
S_{2^{n}}^{O}/2^{n}-\nu \right\Vert _{2}>\delta /2\right) \right] .
\label{tail}
\end{equation}%
If we could assume that the $X_{i}$'s have a finite moment generating
function in a neighborhood of the origin it would be easy to see that (\ref%
{tail}) decays at a speed which is $o\left( 2^{-n\left( 1+\alpha \right)
}\right) $ (actually the rate would be superexponentially fast in $n$).
However, we are not assuming the existence of a finite moment generating
function, but we are assuming the existence of finite second moments. The
intuition that we will exploit is that the large deviations event that is
being introduced in (\ref{tail}) would be driven (in the worst case) by a
large jump (that is, we operate based on intuition borrowed from large
deviations theory for heavy-tailed increments). So, following this
intuition, we define, for some $\delta ^{\prime }>0$ small to be determined
in the sequel, the set
\begin{equation*}
\mathcal{A}_{n}=\{1\leq i\leq 2^{n}:\left\Vert X_{i}-v\right\Vert _{2}\geq
2^{n\left( 1-\delta ^{\prime }\right) }\}
\end{equation*}
and $N_{n}=\left\vert \mathcal{A}_{n}\right\vert $. In simple words, $N_{n}$
is the number of increments defining $S_{2^{n}}^{O}$ which are large. Note
that
\begin{eqnarray*}
&&E\left[ \lvert \Delta _{n}\rvert ^{2}I\left( \left\Vert
S_{2^{n}}^{O}/2^{n}-\nu \right\Vert _{2}>\delta /2\right) \right] \\
&=&E\left[ \lvert \Delta _{n}\rvert ^{2}I\left( \left\Vert
S_{2^{n}}^{O}/2^{n}-\nu \right\Vert _{2}>\delta /2\right) I\left(
N_{n}=0\right) \right] \\
&&+E\left[ \lvert \Delta _{n}\rvert ^{2}I\left( \left\Vert
S_{2^{n}}^{O}/2^{n}-\nu \right\Vert _{2}>\delta /2\right) I\left( N_{n}\geq
1\right) \right] .
\end{eqnarray*}%
We can easily verify using Chernoff's bound that for any $\gamma >0$, we
have that
\begin{equation*}
P\left( \left\Vert S_{2^{n}}^{O}/2^{n}-\nu \right\Vert _{2}>\delta
/2|N_{n}=0\right) =o\left( 2^{-n\gamma }\right) ,
\end{equation*}%
this implies that
\begin{eqnarray*}
&&E\left[ \lvert \Delta _{n}\rvert ^{2}I\left( \left\Vert
S_{2^{n}}^{O}/2^{n}-\nu \right\Vert _{2}>\delta /2\right) I\left(
N_{n}=0\right) \right] \\
&\leq &E\left[ \lvert \Delta _{n}\rvert ^{2\left( 1+\alpha \right) }\right]
^{1/(1+\alpha )}P\left( \left\Vert S_{2^{n}}^{O}/2^{n}-\nu \right\Vert
_{2}>\delta /2,N_{n}=0\right) ^{\alpha /\left( 1+\alpha \right) } \\
&=&o\left( 2^{-n\left( 1+\alpha \right) }\right) .
\end{eqnarray*}%
On the other hand, note that
\begin{eqnarray*}
&&2^{-2n}E\left[ \left\Vert X_{i}-v\right\Vert _{2}^{2}I\left( \left\Vert
X_{i}-v\right\Vert _{2}>2^{n\left( 1-\delta ^{\prime }\right) \left(
1+\alpha \right) }\right) \right] \\
&=&2^{-2n+1}\int_{2^{n\left( 1-\delta ^{\prime }\right) \left( 1+\alpha
\right) }}^{\infty }tP\left( \left\Vert X_{i}-v\right\Vert _{2}>t\right) dt
\\
&\leq &2^{-2n+1}\int_{2^{n\left( 1-\delta ^{\prime }\right) \left( 1+\alpha
\right) }}^{\infty }\frac{E\left( \left\Vert X_{i}-v\right\Vert
_{2}^{3\left( 1+\alpha \right) }\right) }{t^{2+3\alpha }}dt \\
&=&O\left( 2^{-2n-n\left( 1+3\alpha \right) \left( 1-\delta ^{\prime
}\right) \left( 1+\alpha \right) }\right) .
\end{eqnarray*}%
Using the previous estimate, it follows easily that
\begin{equation*}
E\left[ \lvert \Delta _{n}\rvert ^{2}I\left( N_{n}=1\right) \right] =O\left(
2^{n}\cdot 2^{-2n-n\left( 1+3\alpha \right) \left( 1-\delta ^{\prime
}\right) \left( 1+\alpha \right) }\right) .
\end{equation*}%
The previous expression is $O\left( 2^{-2n}\right) $ if $\delta ^{\prime }>0$
is chosen sufficiently small. Similarly, for any fixed $k$,
\begin{eqnarray*}
&&E\left[ \lvert \Delta _{n}\rvert ^{2}I\left( N_{n}=k\right) \right] \\
&=&O\left( 2^{\left( k-2\right) n}2^{-n\cdot k\left( \left( 1+3\alpha
\right) \left( 1-\delta ^{\prime }\right) \left( 1+\alpha \right) \right)
}\right) =O\left( 2^{-2n}\right) .
\end{eqnarray*}%
On the other hand,
\begin{equation*}
P\left( N_{n}\geq m\right) =O\left( 2^{nm}2^{-nm\cdot 3\left( 1-\delta
^{\prime }\right) \left( 1+\alpha \right) }\right) .
\end{equation*}%
We then obtain that by selecting $\delta ^{\prime }>0$ sufficiently small
and $m$ large so that
\begin{equation*}
m\cdot \alpha \left( 3\left( 1-\delta ^{\prime }\right) -\frac{1}{1+\alpha }%
\right) \geq 2,
\end{equation*}%
we conclude
\begin{equation*}
E\left[ \lvert \Delta _{n}\rvert ^{2}I\left( N_{n}\geq m\right) \right] \leq
E\left[ \lvert \Delta _{n}\rvert ^{2\left( 1+\alpha \right) }\right]
^{1/\left( 1+\alpha \right) }P\left( N_{n}\geq m\right) ^{\alpha /\left(
1+\alpha \right) }=O\left( 2^{-2n}\right) .
\end{equation*}%
Consequently, we have that
\begin{eqnarray*}
&&E\left[ \lvert \Delta _{n}\rvert ^{2}I\left( \left\Vert
S_{2^{n}}^{O}/2^{n}-\nu \right\Vert _{2}>\delta /2\right) I\left( N_{n}\geq
1\right) \right] \\
&\leq &\sum_{k=1}^{m-1}E\left[ \lvert \Delta _{n}\rvert ^{2}I\left(
N_{n}=k\right) \right] +E\left[ \lvert \Delta _{n}\rvert ^{2}I\left(
N_{n}\geq m\right) \right] =O\left( 2^{-2n}\right) .
\end{eqnarray*}

A similar analysis yields that
\begin{equation*}
E\left[ \left\vert \Delta _{n}\right\vert ^{2}I\left( \max \left( \left\Vert
S_{2^{n}}^{O}/2^{n}-\nu \right\Vert _{2},\left\Vert S_{2^{n}}^{E}/2^{n}-\nu
\right\Vert _{2}\right) >\delta /2\right) \right] =O\left( 2^{-2n}\right) ,
\end{equation*}%
therefore the estimator $Z$ has finite variance.

Finally the sampling complexity of producing one copy of $\Delta_n$ is
\begin{equation*}
C_n=2^{n+1}+c=O\left(2^n\right)
\end{equation*}
with some constant $c>0$.
\end{proof}

\subsection{Application to steady-state regenerative simulation}

The context of steady-state simulation provides an important instance in
which developing unbiased estimators is desirable. Recall that if $%
\left(W(n):n\ge0\right)$ is a positive recurrent regenerative process taking
values on some space $\mathcal{Y}$, then for all measurable set $A$, we have
the following limit holds with probability one
\begin{equation*}
\pi(A):=\lim_{m\to\infty}\frac{1}{m}\sum_{n=0}^mI\left(W(n)\in A\right)=%
\frac{E_0\left[\sum_{n=0}^{\tau-1}I\left(W(n)\in A\right)\right]}{E_0\left[%
\tau\right]},
\end{equation*}
where the notation $E_0$ indicates that $W(\cdot)$ is zero-delayed under the
associated probability measure $P_0\left(\cdot\right)$. The limiting measure
$\pi(\cdot)$ is the unique stationary distribution of the process $W(\cdot)$%
; for additional discussion on regenerative processes see the appendix on
regenerative process in \cite{Asmussen-Glynn-2008}, and also \cite%
{Asmussen-2000}. Most ergodic Markov chain that arise in practice are
regenerative; certainly all irreducible and positive recurrent countable
state-space Markov chains are regenerative.

A canonical example which is useful to keep in mind to conceptualize a
regenerative process is the waiting time sequence of the single server
queue. In which case, it is well known that the waiting time of the $n$-th
customer, $W(n)$, satisfies the recursion $W(n+1)=\max \left(
W(n)+Y(n+1),0\right) $, where the $Y(n)$'s form an iid sequence of random
variables with negative mean. The waiting time sequence regenerates at zero,
so if $W(0)=0$, the waiting time sequence forms a zero-delayed regenerative
process. Let $f(\cdot )$ be a bounded measurable function and write
\begin{equation*}
X_{1}=\sum_{n=1}^{\tau -1}f\left( W(n)\right) ~~~\mbox{and}~~~X_{2}=\tau ,
\end{equation*}%
then we can estimate the stationary expectation $E_{\pi }f\left( W\right) $
via the ratio
\begin{equation}
E_{\pi }\left[ f\left( W\right) \right] =\frac{E_{0}\left[ X_{1}\right] }{%
E_{0}\left[ X_{2}\right] }.  \label{ratio-estimator}
\end{equation}%
Since $\tau \geq 1$, it follows that for $g\left( x_{1},x_{2}\right)
=x_{1}/x_{2}$, assumptions can be easily verified and therefore Theorem \ref%
{thm-function-expectation} applies, we need to assume that $E\left(
\left\vert X_{1}\right\vert ^{3+\varepsilon }\right) <\infty $\ and %
$E\left( \tau ^{3+\varepsilon }\right) <\infty $\ for some $%
\varepsilon >0$.

\subsection{Additional applications}

In addition to steady-state simulation, ratio estimators such (\ref%
{ratio-estimator}) arise in the context of particle filters and
state-dependent importance sampling for Bayesian computations, see \cite%
{DelMoral-2004} and \cite{Liu-2008}.

In the context of Bayesian inference, one is interested in estimating
expectations from some density $(\pi(y):y\in\mathcal{Y})$ fo the form $%
\pi(y)=h(y)/\gamma$, where $h(\cdot)$ is a non-negative function with a
given (computable) functional form and $\gamma>0$ is a normalizing constant
which is not computable, but is well defined (i.e. finite) and ensures that $%
\pi(\cdot)$ is indeed a well defined density on $\mathcal{Y}$. Since $%
\gamma>0$ is unknown one must resort to techniques such as Markov chain
Monte Carlo or sequential importance sampling to estimate $E_{\pi}\left[f(Y)%
\right]$ (for any integrable function $f(\cdot)$), see for instance \cite%
{Liu-2008}.

Ultimately, the use of sequential importance samplers or particle filters
relies on the identity
\begin{equation}  \label{eq-seq-sample}
E_{\pi}\left[f(Y)\right]=E_{q}\left[\frac{h(Y)}{q(Y)}f(Y)\right]/E_{q}\left[%
\frac{h(Y)}{q(Y)}\right],
\end{equation}
where $(q(y):y\in\mathcal{Y})$ is a density on $\mathcal{Y}$ and $E_q\left[%
\cdot\right]$ denotes the expectation operator associated to $q(\cdot)$ (and
we use $P_q(\cdot)$ for the associated probability). Of course, we must have
that the likelihood ratio $\pi(Y)/q(Y)$ well defined almost surely with
respect to $P_q(\cdot)$ and
\begin{equation*}
E_q\left[\frac{\pi(Y)}{q(Y)}\right]=1.
\end{equation*}

Thus, by using sequential importance sampling or particle filters one
produces a ratio estimator (\ref{eq-seq-sample}) and therefore the
application of our result in this setting is very similar to the one
described in the previous subsection. The verification of Theorem \ref%
{thm-function-expectation} requires additional assumption on the selection
of $q(\cdot)$, which should have heavier tails than $\pi(\cdot)$ in order to
satisfy Assumption 3.

\section{Stochastic Convex Optimization}

\label{sec:convex}

In this section we study a wide range of stochastic optimization problems
and we show that the general principle applies. This section studies
situations in which, going back to Section \ref{sec:general-principle}, the
derivative $T_{\mu }^{\theta }$ may be difficult to characterize and
analyze, but the general principle is still applicable. So, in this section
we study its applications directly.

Consider the following constrained stochastic convex optimization problem
\begin{equation}
\begin{matrix}
\displaystyle\min & f(\beta )=E_{\mu }\left[ F(\beta ,X)\right] \\
\text{s.t.} & G(\beta )\leq 0,%
\end{matrix}
\label{nice-cvx}
\end{equation}%
where $\mathcal{D}=\{\beta \in \mathbb{R}^{d}:F\left( \beta \right) \leq 0\}$
is a nonempty closed subset of $\mathbb{R}^{d}$. $f$ is a convex map from $%
\mathbb{R}^{d}$ to $\mathbb{R}$. $G\left( \beta \right) =\left( g_{1}(\beta
),\ldots ,g_{m}(\beta )\right) ^{T}$ is a vector-valued convex function for
some $m\in \mathbb{N}$. $X$ is a random vector whose probability
distribution $\mu $ is supported on a set $\Omega \subset \mathbb{R}^{k}$,
and $F:\mathcal{D}\times \Omega \rightarrow \mathbb{R}$.

Let $\beta _{\ast }$ denote the optimal solution and $f_{\ast }=f(\beta
_{\ast })$ denote the optimal objective value. Lagrangian of problem (\ref%
{nice-cvx}) is
\begin{equation}
L\left( \beta ,\lambda \right) =f\left( \beta \right) +\lambda ^{T}G\left(
\beta \right) .  \label{lagrangian}
\end{equation}%
If $f(\cdot )$ and $g_{i}(\cdot )$'s are continuously differentiable for $%
i=1,\ldots ,m$, the following \textit{Karush-Kuhn-Tucker} (KKT) conditions
are sufficient and necessary for optimality:
\begin{align}
\nabla _{\beta }L\left( \beta _{\ast },\lambda _{\ast }\right) & =\nabla
f(\beta _{\ast })+\nabla G\left( \beta _{\ast }\right) \lambda _{\ast }=0,
\label{kkt-1} \\
G\left( \beta _{\ast }\right) & \leq 0,  \label{kkt-2} \\
\lambda _{\ast }^{T}G\left( \beta _{\ast }\right) & =0,  \label{kkt-3} \\
\lambda _{\ast }& \geq 0,  \label{kkt-4}
\end{align}%
where $\lambda _{\ast }\in \mathbb{R}^{m}$ is the Lagrangian multiplier
corresponding to $\beta _{\ast }$.

One of the standard tools in such settings is the method of Sample Average
Approximation (SAA), which consists in replacing the expectations by the
empirical means. Suppose we have $n$ iid copies of the random vector $X$,
denoted as $\{X_1,\ldots,X_n\}$, we solve the following optimization problem
\begin{equation}  \label{saa-cvx}
\begin{matrix}
\displaystyle \min & f_n(\beta)=\frac{1}{n}\sum_{i=1}^nF(\beta,X_i) \\
\text{s.t.} & G(\beta) \le0%
\end{matrix}%
\end{equation}
as an approximation to the original problem (\ref{nice-cvx}). Let $\beta_n$
denote the optimal solution and let $\hat{f}_n=f_n(\beta_n)$ denote the
optimal value of the SAA problem (\ref{saa-cvx}). The traditional SAA
approach is to use them as estimators to the true optimal solution $\beta_*$
and optimal target value $f_*$ of the problem (\ref{nice-cvx}). Although the
SAA estimators are easy to construct and consistent, they are biased.
Proposition 5.6 in \cite{Shapiro:2009} shows $E[\hat{f}_n]\le f_*$ for any $%
n\in\mathbb{N}$.

We construct unbiased estimators for the optimal solution and optimal value
of problem (\ref{nice-cvx}) by utilizing the SAA estimators. Let $%
\beta_{2^{n+1}}$, $\beta^O_{2^n}$, $\beta^E_{2^n}$ denote the SAA optimal
solutions as
\begin{align*}
\beta_{2^{n+1}}&=\argmin_{G(\beta)\le0}f_{2^{n+1}}(\beta)=\argmin%
_{G(\beta)\le0}\frac{1}{2^{n+1}}\sum_{i=1}^{2^{n+1}}F\left(\beta,X_i\right),
\\
\beta^O_{2^n} &=\argmin_{G(\beta)\le0}f^O_{2^n}(\beta)=\argmin_{G(\beta)\le0}%
\frac{1}{2^n}\sum_{i=1}^{2^n}F\left(\beta,X_i^O\right), \\
\beta^E_{2^n} &= \argmin_{G(\beta)\le0}f^E_{2^n}(\beta)=\argmin%
_{G(\beta)\le0}\frac{1}{2^n}\sum_{i=1}^{2^n}F\left(\beta,X_i^E\right).
\end{align*}
Let $\hat{f}_{2^{n+1}} = f_{2^{n+1}}\left(\beta_{2^{n+1}}\right)$, $\hat{f}%
_{2^n}^O=f^O_{2^n}\left(\beta^O_{2^n}\right)$ and $\hat{f}%
_{2^n}^E=f^E_{2^n}\left(\beta^E_{2^n}\right)$ denote the SAA optimal values.
Similarly we let $\lambda_{2^{n+1}}$, $\lambda_{2^n}^O$ and $\lambda_{2^n}^E$
denote the corresponding Lagrange multipliers.

We define
\begin{equation*}
\Delta_n=\hat{f}_{2^{n+1}}-\frac{1}{2}\left(\hat{f}^O_{2^n}+\hat{f}%
^E_{2^n}\right) \ \mbox{ and } \ \bar{\Delta}_n=\beta_{2^{n+1}}-\frac{1}{2}%
\left(\beta_{2^n}^O+\beta_{2^n}^E\right)
\end{equation*}
for all $n\ge0$, then the estimator of the optimal value $f_*$ is
\begin{equation}  \label{cvx-value-estimator}
Z=\frac{\Delta_N}{p(N)}+\hat{f}_1
\end{equation}
and the estimator of the optimal solution $\beta_*$ is
\begin{equation}  \label{cvx-solution-estimator}
\bar{Z}=\frac{\bar{\Delta}_n}{p(N)}+\beta_1,
\end{equation}
where $N$ was defined in Section \ref{sec:general-principle}. We now impose
assumptions in this setting so that Assumption \ref%
{general-principle-assumptions} for the general principles can be verified
for both $\Delta_n$ and $\bar{\Delta}_n$.

\begin{asu}
\label{convex-assumptions} Stochastic convex optimization assumptions:

\refstepcounter{subassumption} \thesubassumption~\ignorespaces\label%
{asu-cvx-compact} The feasible region $\mathcal{D}\subset \mathbb{R}^{d}$ is
compact.

\refstepcounter{subassumption} \thesubassumption~\ignorespaces\label%
{asu-cvx-uniqueness} $f$ has a unique optimal solution $\beta _{\ast }\in
\mathcal{D}$.

\refstepcounter{subassumption} \thesubassumption~\ignorespaces\label%
{asu-cvx-convex} $F(\cdot ,X)$ is finite, convex and twice continuously
differentiable on $\mathcal{D}$ a.s.

\refstepcounter{subassumption} \thesubassumption~\ignorespaces\label%
{asu-cvx-hcalm} There exists a locally bounded measurable function $\kappa
:\Omega \rightarrow \mathbb{R}_{+}$, $\gamma >0$ and $\delta >0$ such that
\begin{equation*}
\lvert F\left( \beta ^{\prime },X\right) -F\left( \beta ,X\right) \rvert
\leq \kappa (X)\lVert \beta ^{\prime }-\beta \rVert ^{\gamma }
\end{equation*}%
for all $\beta ,\beta ^{\prime }\in \mathcal{D}$ with $\lVert \beta ^{\prime
}-\beta \rVert \leq \delta $ and $X\in \Omega $; and $\kappa (X)$ has finite
moment generating function in a neighborhood of the origin.

\refstepcounter{subassumption} \thesubassumption~\ignorespaces\label%
{asu-cvx-mgf} Define,%
\begin{equation}
M_{\beta }(t)=E\left[ \exp \left( t\left( F(\beta ,X_{i})-f(\beta )\right)
\right) \right]  \label{eq:mgf}
\end{equation}%
and assume that there exists $\delta _{0}>0$ and $\sigma ^{2}>0$ such that
for $\left\vert t\right\vert \leq \delta _{0}$,
\begin{equation*}
\sup_{\beta \in \mathcal{D}}M_{\beta }(t)\leq \exp \left( \sigma
^{2}t^{2}/2\right) .
\end{equation*}

\refstepcounter{subassumption} \thesubassumption~\ignorespaces\label%
{asu-cvx-gradient} There is $\delta _{0}^{\prime }>0$ and $t>0$ such that%
\begin{equation*}
\sup_{\left\Vert \beta -\beta _{\ast }\right\Vert \leq \delta _{0}^{\prime
}}E\left[ \exp \left( t\left\Vert \nabla _{\beta }F(\beta ,X)\right\Vert
\right) \right] <\infty .
\end{equation*}

\refstepcounter{subassumption} \thesubassumption~\ignorespaces\label%
{asu-cvx-hessian} $E\left[ \left\Vert \nabla _{\beta \beta }^{2}F\left(
\beta _{\ast },X\right) \right\Vert ^{p}\right] <\infty $ with some $p>2$.

\refstepcounter{subassumption} \thesubassumption~\ignorespaces\label%
{asu-cvx-constraint-convex} $G(\beta )=\left( g_{1}(\beta ),\ldots
,g_{m}(\beta )\right) ^{T}$ and $g_{i}(\cdot )$ is twice continuously
differentiable convex function for all $1\leq i\leq m$.

\refstepcounter{subassumption} \thesubassumption~\ignorespaces\label%
{asu-cvx-slater} There is $\beta \in \mathcal{D}$ such that $G\left( \beta
\right) <0$ (Slater conditions ensures strong duality).

\refstepcounter{subassumption} \thesubassumption~\ignorespaces\label%
{asu-cvx-licq} LICQ holds at $\beta _{\ast }$, i.e., the gradient vectors $%
\left\{ \nabla g_{i}(\beta _{\ast }):g_{i}(\beta _{\ast })=0\right\} $ are
linearly independent (LICQ is the weakest condition to ensure the uniqueness
of Lagrangian multiplier; see \cite{Wachsmuth2013} for instance).

\refstepcounter{subassumption} \thesubassumption~\ignorespaces\label%
{asu-cvx-strict-complementary} Strict complementarity condition holds, i.e.,
$\lambda _{\ast }(i)>0$ when $g_{i}(\beta _{\ast })=0$ for all $i=1,\ldots
,m $,


\end{asu}

We summarize the discussion of unbiased estimator for the optimal solution $%
\beta_*$ as Theorem \ref{unbiased-optimizer} in Section \ref%
{unbiased-solution}, and unbiased estimator for the optimal objective value $%
f_*$ as Theorem \ref{unbiased-optimal} in Section \ref{unbiased-value}.

\subsection{Unbiased estimator of optimal solution}

\label{unbiased-solution}

In this section, we will utilize the large deviation principles for the SAA
optimal solutions develop in \cite{Xu:2010}. We first provide the following
lemma to summarize the LDP.

\begin{lemma}
\label{ldp-lemma} If Assumptions \ref{asu-cvx-compact}, \ref%
{asu-cvx-uniqueness}, \ref{asu-cvx-hcalm}, \ref%
{asu-cvx-mgf} hold, then for every $\epsilon>0$, there exist positive
constants $c(\epsilon)$ and $\alpha(\epsilon)$, independent of $n$, such
that for $n$ sufficiently large
\begin{equation*}
P\left(\lVert\beta_{2^n}-\beta_*\rVert\ge\epsilon\right)\le
c_{\epsilon}e^{-2^n\alpha(\epsilon)},
\end{equation*}
where $\alpha(\epsilon)$ is locally quadratic at the origin, i.e., $%
\alpha(\epsilon)=\alpha_0\epsilon^2$ as $\epsilon\to0$ with $\alpha_0>0$.
\end{lemma}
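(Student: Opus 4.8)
The plan is to reduce the large-deviation estimate for the SAA optimizer $\beta_{2^n}$ to a uniform large-deviation estimate for the empirical objective $f_{2^n}(\cdot)$ over the compact feasible region $\mathcal{D}$, and then exploit the fact that, by uniqueness of $\beta_\ast$ together with convexity, the true objective $f$ grows at least quadratically away from $\beta_\ast$. Concretely, write $n$ in place of $2^n$ for readability and set $\phi(\epsilon) = \inf\{ f(\beta) - f_\ast : \beta \in \mathcal{D},\ \lVert \beta - \beta_\ast \rVert \ge \epsilon \}$. First I would argue that $\phi(\epsilon) > 0$ for every $\epsilon > 0$: this follows because $\{\beta \in \mathcal{D} : \lVert \beta - \beta_\ast \rVert \ge \epsilon\}$ is compact (Assumption \ref{asu-cvx-compact}), $f$ is continuous (it is convex and finite, hence continuous on the interior, and one checks continuity up to the boundary using Assumption \ref{asu-cvx-hcalm}), and $\beta_\ast$ is the unique minimizer (Assumption \ref{asu-cvx-uniqueness}). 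Moreover, convexity of $f$ forces $\phi$ to be (up to constants) at least linear in $\epsilon$ for small $\epsilon$, and a second-order expansion near $\beta_\ast$ — legitimate under the smoothness hypotheses — will give $\phi(\epsilon) \ge \tfrac12 \lambda_{\min}(\nabla^2 f(\beta_\ast))\, \epsilon^2 (1 + o(1))$ as $\epsilon \to 0$, which is exactly where the locally quadratic rate $\alpha(\epsilon) = \alpha_0 \epsilon^2$ comes from.

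Next I would make the standard optimization-stability observation: if $\lVert \beta_n - \beta_\ast \rVert \ge \epsilon$, then since $\beta_n$ minimizes $f_n$ over $\mathcal{D}$ we have $f_n(\beta_n) \le f_n(\beta_\ast)$, hence
\begin{equation*}
\phi(\epsilon) \le f(\beta_n) - f(\beta_\ast) = \big(f(\beta_n) - f_n(\beta_n)\big) + \big(f_n(\beta_n) - f_n(\beta_\ast)\big) + \big(f_n(\beta_\ast) - f(\beta_\ast)\big) \le 2 \sup_{\beta \in \mathcal{D}} \lvert f_n(\beta) - f(\beta) \rvert .
\end{equation*}
Therefore $P(\lVert \beta_n - \beta_\ast \rVert \ge \epsilon) \le P\big(\sup_{\beta \in \mathcal{D}} \lvert f_n(\beta) - f(\beta)\rvert \ge \phi(\epsilon)/2\big)$, and it suffices to bound the latter. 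The main work is thus a uniform Cramér-type bound over $\mathcal{D}$. I would handle this by a chaining/covering argument: cover $\mathcal{D}$ by $O(\delta^{-d})$ balls of radius $\delta$ centered at points $\beta^{(1)}, \dots, \beta^{(K)}$; on each center, Assumption \ref{asu-cvx-mgf} gives the sub-Gaussian bound $P(\lvert f_n(\beta^{(j)}) - f(\beta^{(j)}) \rvert \ge t/2) \le 2\exp(-n t^2 / (8\sigma^2))$ for $t$ in a fixed neighborhood of $0$ (obtained from the sub-Gaussian MGF bound via a Chernoff argument, noting $\sup_{|s|\le\delta_0} M_\beta(s) \le \exp(\sigma^2 s^2/2)$); and the oscillation of $f_n - f$ within each ball is controlled by the Hölder-modulus Assumption \ref{asu-cvx-hcalm}, since $\lvert f_n(\beta') - f_n(\beta)\rvert \le \bar\kappa_n \lVert\beta'-\beta\rVert^\gamma$ with $\bar\kappa_n = n^{-1}\sum \kappa(X_i)$, and $\bar\kappa_n$ has a finite MGF near the origin so $P(\bar\kappa_n \ge E\kappa(X) + 1) $ decays exponentially in $n$ by Cramér's theorem. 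Choosing $\delta$ so that $(E\kappa(X)+1)\delta^\gamma \le \phi(\epsilon)/4$ and taking a union bound over the $K = O(\delta^{-d})$ centers yields
\begin{equation*}
P\Big(\sup_{\beta\in\mathcal{D}} \lvert f_n(\beta) - f(\beta)\rvert \ge \phi(\epsilon)/2\Big) \le c_\epsilon\, e^{-n\, \alpha(\epsilon)}
\end{equation*}
for suitable $c_\epsilon, \alpha(\epsilon) > 0$; the polynomial-in-$\delta^{-1}$ count of centers is absorbed into the constant $c_\epsilon$, and $\alpha(\epsilon)$ inherits the order $\phi(\epsilon)^2 \asymp \epsilon^4$ — so in fact one should be a little more careful and argue, as in \cite{Xu:2010}, that exploiting convexity directly (rather than through $\phi(\epsilon)^2$) upgrades this to the sharp $\alpha(\epsilon) \asymp \epsilon^2$ rate near the origin. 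Replacing $n$ by $2^n$ throughout gives the stated bound.

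The main obstacle is obtaining the \emph{sharp} quadratic (rather than quartic) dependence $\alpha(\epsilon) = \alpha_0\epsilon^2$ near $\epsilon = 0$. The crude two-sided bound above loses a square because it passes through $\sup_\beta |f_n - f| \ge \phi(\epsilon)/2$ with $\phi(\epsilon) \asymp \epsilon^2$, giving a rate $\asymp \epsilon^4$. To recover the correct order one must localize: near $\beta_\ast$, expand both $f_n$ and $f$ to second order, observe that the gradient term $\nabla f_n(\beta_\ast) - \nabla f(\beta_\ast) = n^{-1}\sum(\nabla_\beta F(\beta_\ast, X_i) - \nabla f(\beta_\ast))$ is a centered sum whose large deviations — controlled by Assumption \ref{asu-cvx-gradient} — of size $\asymp \epsilon$ are what actually move the minimizer by $\asymp \epsilon$, so the relevant rate is $\asymp \epsilon^2$, not $\asymp \epsilon^4$. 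Rather than reproduce this delicate localization, I would invoke it directly from \cite{Xu:2010}, whose hypotheses are precisely Assumptions \ref{asu-cvx-compact}, \ref{asu-cvx-uniqueness}, \ref{asu-cvx-hcalm}, \ref{asu-cvx-mgf}, and quote the locally quadratic rate function from there; the role of this lemma is merely to repackage that result in the $n \mapsto 2^n$ form needed for verifying Assumption \ref{general-principle-assumptions} in the next subsection.
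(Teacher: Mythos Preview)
Your proposal ultimately lands on the same place as the paper --- both invoke \cite{Xu:2010} for the large-deviation bound and the quadratic rate --- but the self-contained route you sketch beforehand does not go through under the lemma's stated hypotheses, and this is worth flagging.

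You correctly diagnose that routing through $\sup_{\beta}\lvert f_n(\beta) - f(\beta)\rvert \ge \phi(\epsilon)/2$, with $\phi(\epsilon)\asymp\epsilon^2$, yields only a quartic rate $\alpha(\epsilon)\asymp\epsilon^4$. But your proposed remedy --- localize via the empirical gradient $\nabla f_n(\beta_\ast)$ and appeal to Assumption~\ref{asu-cvx-gradient} --- is not available: the lemma assumes only \ref{asu-cvx-compact}, \ref{asu-cvx-uniqueness}, \ref{asu-cvx-hcalm}, \ref{asu-cvx-mgf}, and \ref{asu-cvx-gradient} is not among them. Likewise, your second-order expansion $\phi(\epsilon)\sim\tfrac12\lambda_{\min}(\nabla^2 f(\beta_\ast))\epsilon^2$ presumes $C^2$-smoothness of $f$ (i.e.\ something like \ref{asu-cvx-convex} or \ref{asu-cvx-hessian}), which again is outside the lemma's hypotheses. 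So neither step of your sharpening is legitimate here.

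The paper's proof avoids $\phi(\epsilon)$ altogether. It extracts directly from the proof of Theorem~4.1 in \cite{Xu:2010} a bound of the form
\[
P\bigl(\lVert\beta_{2^n}-\beta_\ast\rVert\ge\epsilon\bigr)\;\le\;e^{-2^n\lambda}\;+\;\sum_{i=1}^{M}\exp\Bigl(-2^n\min\bigl(I_{\bar\beta_i}(\epsilon/4),\,I_{\bar\beta_i}(-\epsilon/4)\bigr)\Bigr),
\]
where $I_\beta$ is the Cram\'er rate function of $F(\beta,X)-f(\beta)$ and $\{\bar\beta_i\}_{i=1}^{M}$ is a $v$-net on $\mathcal{D}$ chosen so that $\lvert f(\beta)-f(\bar\beta_i)\rvert\le\epsilon/4$ within each cell. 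The sub-Gaussian hypothesis \ref{asu-cvx-mgf} gives $I_\beta(z)\ge z^2/(2\sigma^2)$ for small $\lvert z\rvert$, so each exponent is at least $\epsilon^2/(32\sigma^2)$; and since the net size $M$ grows only polynomially in $1/\epsilon$, the prefactor is absorbed into $c_\epsilon$. The crucial difference from your argument is that the deviation threshold entering the rate function is $\epsilon/4$ --- \emph{linear} in $\epsilon$ --- rather than $\phi(\epsilon)/2$, and this is precisely how \cite{Xu:2010} obtains the quadratic rate using only the four assumptions listed.
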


\begin{proof}
For $\beta \in \mathcal{D},$ let $I_{\beta }(z)=\sup_{t\in \mathbb{R}%
}\{zt-\log M_{\beta }(t)\}$, where $M_{\beta }(t)$ is as defined in (\ref%
{eq:mgf}). The proof of Theorem 4.1 in \cite{Xu:2010} has that if the
assumptions required in Lemma \ref{ldp-lemma} are all enforced, then
\begin{eqnarray*}
&&P\left( \lVert \beta _{2^{n}}-\beta _{\ast }\rVert \geq \epsilon \right) \\
&\leq &\exp \left( -2^{n}\lambda \right) +\sum_{i=1}^{M}\exp \left(
-2^{n}\min (I_{\bar{\beta}_{i}}(\epsilon /4),I_{\bar{\beta}_{i}}(-\epsilon
/4))\right) ,
\end{eqnarray*}%
where $\lambda >0$, $\{\bar{\beta}_{i}\in \mathcal{D}:1\leq i\leq M\}$ is a $%
v$-net constructed by the finite covering theorem, i.e., there exits $v>0$
such that for every $\beta \in \mathcal{D}$, there exists $\bar{\beta}_{i}$,
$i\in \{1,\ldots ,M\}$, $\lVert \beta -\bar{\beta}_{i}\rVert \leq v$,
\begin{equation*}
\lvert F(\beta ,X)-F(\bar{\beta}_{i},X)\rvert \leq \kappa (X)\lVert \beta -%
\bar{\beta}_{i}\rVert ^{\gamma }\ \mbox{  and  }\ \lvert f(\beta )-f(\bar{%
\beta}_{i})\rvert \leq \epsilon /4,
\end{equation*}%
and
\begin{equation*}
\min (I_{\bar{\beta}_{i}}(\epsilon /4),I_{\bar{\beta}_{i}}(-\epsilon
/4))\geq \frac{\epsilon ^{2}}{32\sigma ^{2}},
\end{equation*}%
by Assumption \ref{asu-cvx-hcalm} and Remark 3.1 in \cite{Xu:2010}. Since
the size of $v$-net, $M$, grows in polynomial order of $\epsilon $, we
complete the proof.
\end{proof}

\begin{theorem}
\label{unbiased-optimizer} If Assumptions \ref{convex-assumptions} is in
force, then $E\left[\bar{Z}\right]=\beta_*$, $Var(\bar{Z})<\infty$ and the
computation complexity required to produce $\bar{Z}$ is bounded in
expectation.
\end{theorem}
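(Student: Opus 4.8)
The plan is to verify the three parts of Assumption~\ref{general-principle-assumptions} for the sequence $\{\bar\Delta_n\}$ and then quote the general principle of Section~\ref{sec:general-principle}: once this is done, the computations (\ref{eA})--(\ref{eC}) apply componentwise to $\bar Z=\bar\Delta_N/p(N)+\beta_1$ and give $E[\bar Z]=\beta_*$, $\operatorname{Var}(\bar Z)<\infty$ and $E[C]<\infty$. The value of the exponent will turn out to be $\alpha=1$. Only the $L^2$ bound $E[\lVert\bar\Delta_n\rVert_2^2]=O(2^{-2n})$ is delicate; the cost bound and unbiasedness are comparatively routine.

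For the cost, the three SAA problems defining $\bar\Delta_n$ use $2^{n+1}+2^n+2^n$ samples, so $C_n=O(2^n)$ under the paper's convention that sampling complexity is the cost measure (any optimization overhead is polynomial in the sample size and absorbed in the same order), giving Assumption~\ref{asu-general3}. For unbiasedness, observe that $\beta^O_{2^n}$ and $\beta^E_{2^n}$ each have the same law as $\beta_{2^n}$, so $E[\bar\Delta_n]=E[\beta_{2^{n+1}}]-E[\beta_{2^n}]$, and summing telescopes to $\sum_{n=0}^N E[\bar\Delta_n]=E[\beta_{2^{N+1}}]-E[\beta_1]$. Lemma~\ref{ldp-lemma} together with Borel--Cantelli yields $\beta_{2^n}\to\beta_*$ almost surely, and compactness of $\mathcal D$ (Assumption~\ref{asu-cvx-compact}) supplies uniform integrability, so $E[\beta_{2^{N+1}}]\to\beta_*$; hence $\sum_{n\ge0}E[\bar\Delta_n]+E[\beta_1]=\beta_*$, which is Assumption~\ref{asu-general2}. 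The interchange $E[\bar\Delta_N/p(N)]=\sum_n E[\bar\Delta_n]$ is justified by the absolute convergence that follows from the $L^2$ (hence $L^1$) bound below.

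The heart of the argument is that $L^2$ estimate. Fix $\delta$ small and let $\mathcal G_n$ be the good event on which all three minimizers $\beta_{2^{n+1}},\beta^O_{2^n},\beta^E_{2^n}$ lie within $\delta$ of $\beta_*$ and the associated multipliers lie within $\delta$ of $\lambda_*$. On $\mathcal G_n^c$ we bound $\lVert\bar\Delta_n\rVert_2\le\operatorname{diam}(\mathcal D)$ and invoke Lemma~\ref{ldp-lemma} to get $E[\lVert\bar\Delta_n\rVert_2^2;\mathcal G_n^c]\le C e^{-c\,2^n}=o(2^{-2n})$. On $\mathcal G_n$, Assumptions~\ref{asu-cvx-licq} (LICQ) and \ref{asu-cvx-strict-complementary} (strict complementarity), together with the nonsingularity of the KKT Jacobian at $(\beta_*,\lambda_*)$ built into the assumptions (strong regularity in Robinson's sense), force the active set to be correctly identified and give, for each SAA minimizer, an expansion of the sensitivity type of \cite{Shapiro:2009},
\begin{equation*}
\beta_n-\beta_* = \mathcal{L}\!\left(\nabla f_n(\beta_*)-\nabla f(\beta_*),\ \nabla^2 f_n(\beta_*)-\nabla^2 f(\beta_*)\right) + R_n,
\end{equation*}
where $\mathcal{L}$ is a fixed linear map determined only by the true problem data (the reduced Hessian at $\beta_*$ and the active constraint gradients) and $\lVert R_n\rVert_2 = O(\lVert\nabla f_n(\beta_*)-\nabla f(\beta_*)\rVert_2^2+\dots)$ is the second-order KKT remainder. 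Because the empirical gradient and Hessian at level $2^{n+1}$ are exactly the averages of those at the odd and even half-samples, the terms containing $\mathcal{L}$ cancel in $\bar\Delta_n$, leaving
\begin{equation*}
\bar\Delta_n = R_{2^{n+1}} - \tfrac12\bigl(R^O_{2^n}+R^E_{2^n}\bigr).
\end{equation*}
Each remainder is quadratic in the empirical fluctuations of $\nabla F(\beta_*,X)$ and linear in those of $\nabla^2 F(\beta_*,X)$ times $\beta_n-\beta_*$; using the exponential gradient moment of Assumption~\ref{asu-cvx-gradient} (all polynomial moments of the gradient fluctuation, controlled at rate $2^{-n/2}$ in every $L^q$ by Marcinkiewicz--Zygmund / von Bahr--Esseen, see \cite{Bahr:1965}) and the $p>2$ Hessian moment of Assumption~\ref{asu-cvx-hessian} (so $\lVert\nabla^2 f_{2^n}(\beta_*)-\nabla^2 f(\beta_*)\rVert$ is $O(2^{-n/2})$ in $L^p$), a Hölder split with exponents $p/2$ and $p/(p-2)$ gives $E[\lVert R_{2^n}\rVert_2^2]=O(2^{-2n})$ and likewise for the half-sample remainders. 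Combining the two events yields $E[\lVert\bar\Delta_n\rVert_2^2]=O(2^{-2n})$, i.e. Assumption~\ref{asu-general1} with $\alpha=1$.

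The main obstacle is the expansion on $\mathcal G_n$. One must (i) show the active set stabilizes, which is precisely where strict complementarity and LICQ — rather than the mere calmness of Assumption~\ref{asu-cvx-hcalm} — are indispensable, and where the nonsingularity of the reduced KKT system is needed for $\mathcal{L}$ to be well defined; (ii) produce the \emph{same} deterministic coefficient $\mathcal{L}$ for all three subproblems, so that the leading linear terms genuinely cancel in $\bar\Delta_n$ rather than merely being of the same order; and (iii) bound the quadratic KKT remainder in $L^2$ using only the polynomial Hessian moment of Assumption~\ref{asu-cvx-hessian} while the gradient carries an exponential moment — this asymmetry is exactly what dictates the precise moment hypotheses and pins down $\alpha=1$.
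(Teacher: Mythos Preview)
Your overall strategy matches the paper's: verify the three general-principle assumptions, with the real work concentrated in the $L^2$ bound via a KKT-type expansion in which the first-order (empirical-gradient) contributions cancel between the full sample and the two half-samples. The cost and unbiasedness arguments are essentially identical to the paper's.

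The substantive technical difference is the truncation. The paper does \emph{not} use a fixed-$\delta$ good event; it uses a shrinking moderate-deviation truncation at level $2^{-\rho n}$ with $1/4<\rho<1/2$, obtaining from Lemma~\ref{ldp-lemma} that $P(\lVert\beta_{2^n}-\beta_*\rVert\ge 2^{-\rho n})=\exp(-\alpha_0\,2^{(1-2\rho)n}(1+o(1)))$. The Taylor remainders $\bar R_{n,(\beta,\beta)}$, $\bar R_{n,(\lambda,\lambda)}$, $\bar R_{n,(\beta,\lambda)}$ are then bounded pointwise by $C\cdot 2^{-2\rho n}$ on the truncated event, and this yields only $E[\bar\Delta_n\bar\Delta_n^T]=O(2^{-4\rho n})$, i.e.\ any $\alpha<1$ but not $\alpha=1$. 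Your fixed-$\delta$ route, combined with a pathwise Lipschitz sensitivity bound $\lVert\beta_n-\beta_*\rVert\le C\lVert\nabla f_n(\beta_*)-\nabla f(\beta_*)\rVert$ on the good event, would indeed deliver the sharper $\alpha=1$ if it can be justified; the paper does not pursue this. A small correction: in the first-order sensitivity expansion only the gradient fluctuation enters linearly; the Hessian-fluctuation term $(\nabla^2 f_n(\beta_*)-\nabla^2 f(\beta_*))(\beta_n-\beta_*)$ is already second order and belongs in the remainder (this is the paper's $\bar R_n$), so your $\mathcal L$ should depend on the first argument only.

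There is one genuine gap. You include $\lVert\lambda_n-\lambda_*\rVert\le\delta$ in the definition of $\mathcal G_n$ and then invoke Lemma~\ref{ldp-lemma} to bound $P(\mathcal G_n^c)$, but that lemma controls only $\beta_{2^n}$, not $\lambda_{2^n}$. The paper devotes a separate argument to this point: it introduces the perturbed family (\ref{modified-opt}), identifies $\lambda_{2^n}=\lambda(\bar\eta_{2^n})$ for the explicit empirical perturbation $\bar\eta_{2^n}=2^{-n}\sum_i(\nabla_\beta F(\beta_{2^n},X_i)-\nabla_\beta f(\beta_{2^n}))$, establishes an LDP for $\bar\eta_{2^n}$ from Assumption~\ref{asu-cvx-gradient} via \cite{GaoZhao:2011}, and then transfers it to $\lambda_{2^n}$ by continuous differentiability of $\eta\mapsto\lambda(\eta)$ (which is where LICQ and strict complementarity enter). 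You need either this device or an equivalent one---for instance, enlarging $\mathcal G_n$ to also require smallness of the empirical gradient fluctuation at $\beta_*$ (whose tail is controlled by Cram\'er via Assumption~\ref{asu-cvx-gradient}) and then deducing multiplier closeness on $\mathcal G_n$ from the active KKT equations and LICQ---to legitimately bound $P(\mathcal G_n^c)$.
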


\begin{proof}
If Assumptions \ref{asu-cvx-compact}, \ref{asu-cvx-convex}, \ref%
{asu-cvx-constraint-convex}, \ref{asu-cvx-slater}, \ref{asu-cvx-uniqueness}, %
\ref{asu-cvx-licq} and \ref{asu-cvx-strict-complementary} hold, the
following result is given on page 171 of \cite{Shapiro:2009}
\begin{equation}
2^{n/2}%
\begin{bmatrix}
\beta _{2^{n}}-\beta _{\ast } \\
\lambda _{2^{n}}-\lambda _{\ast }%
\end{bmatrix}%
\Longrightarrow \mathcal{N}\left( 0,J^{-1}\Gamma J\right) ,  \label{clt}
\end{equation}%
where
\begin{equation*}
J=%
\begin{bmatrix}
H & A \\
A^{T} & 0%
\end{bmatrix}%
\ \mbox{  and  }\ \Gamma =%
\begin{bmatrix}
\Sigma & 0 \\
0 & 0%
\end{bmatrix}%
,
\end{equation*}%
$H=\nabla _{\beta \beta }^{2}L\left( \beta _{\ast },\lambda _{\ast }\right)
\in \mathbb{R}_{d\times d}$, $A$ is the matrix whose columns are formed by
vectors $\nabla g_{i}(\beta _{\ast })$ when $g_{i}(\beta _{\ast })=0$ for $%
i=1,\ldots ,m$, and $\Sigma =E\left[ \left( \nabla F(\beta _{\ast
},X)-\nabla f(\beta _{\ast })\right) \left( \nabla F(\beta _{\ast
},X)-\nabla f(\beta _{\ast })\right) ^{T}\right] $. Nonsingularity of $J$ is
guaranteed by Assumptions \ref{asu-cvx-licq} and \ref%
{asu-cvx-strict-complementary}.

We first show $\bar{Z}$ is unbiased. For $n\ge0$,
\begin{equation*}
E\left[\bar{\Delta}_n\right]=E\left[\beta_{2^{n+1}}\right]-E\left[\beta_{2^n}%
\right].
\end{equation*}
Since the feasible region $\mathcal{D}\subset\mathbb{R}^d$ is closed and
bounded by Assumption \ref{asu-cvx-compact}, $\left\{\beta_{2^n}:n\ge0\right%
\}$ is uniformly integrable. With $\beta_{2^n}\to\beta_*$ in (\ref{clt}), we
have
\begin{equation*}
E\left[\bar{Z}\right]=\sum_{n=1}^{\infty}E\left[\bar{\Delta}_n\right]+E\left[%
\beta_1\right]=\lim_{n\to\infty}E\left[\beta_{2^n}\right]=\beta_*.
\end{equation*}

We next prove $Var(\bar{Z})<\infty $ by showing $E\left[ \bar{\Delta}_{n}%
\bar{\Delta}_{n}^{T}\right] =O\left( 2^{-(1+\alpha )n}\right) $ with some $%
\alpha >0$. Let $m\left( \beta _{\ast }\right) =E\left[ \nabla _{\beta \beta
}^{2}F\left( \beta _{\ast },X\right) \right] $. The key ingredients are --
firstly we use the large deviation principle (LDP) of $\{\beta
_{2^{n}}:n\geq 0\}$ to get moderate deviation estimates for $\{\beta
_{2^{n}}:n\geq 0\}$, secondly to use extended contraction principle with
modified optimization problems to translate the LDP to the sequence of
Lagrange multipliers $\{\lambda _{2^{n}}:n\geq 0\}$. For the first part, we
have by Lemma \ref{ldp-lemma} that
\begin{equation*}
P\left( \lVert \beta _{2^{n}}-\beta _{\ast }\rVert \geq \epsilon \right)
=\exp \left( -2^{n}\alpha (\epsilon )+o(2^{n})\right)
\end{equation*}%
for all $\epsilon >0$ sufficiently small and $\alpha (\epsilon )=\alpha
_{0}\epsilon ^{2}(1+o(1))$ as $\epsilon \rightarrow 0$ for $\alpha _{0}>0$.
This yields moderate deviation estimates for $\{\beta _{2^{n}}:n\geq 0\}$.
In particular, we let $\epsilon \rightarrow 0$ at a speed of the the form $%
\epsilon =2^{-\rho n}$ for $1/4<\rho <1/2$ and the limit above will still
provide the correct rate of convergence, i.e.,
\begin{equation*}
P\left( \lVert \beta _{2^{n}}-\beta _{\ast }\rVert \geq 2^{-\rho n}\right)
=\exp \left( -\alpha _{0}2^{(1-2\rho )n}+o\left( 2^{(1-2\rho )n}\right)
\right) .
\end{equation*}%
Then to translate this LDP to $\{\lambda _{2^{n}}:n\geq 0\}$, we consider a
family of modified optimization problems (indexed by $\eta $)
\begin{equation}
\begin{matrix}
\displaystyle\min & f_{\eta }(\beta )=E_{\mu }\left[ F(\beta ,X)\right]
+\eta ^{T}\beta \\
\text{s.t.} & G(\beta )\leq 0%
\end{matrix}%
,  \label{modified-opt}
\end{equation}%
and its associated optimal solution, $\beta (\eta )$, with the associated
Lagrange multiplier, $\lambda (\eta )$, for the modified problem. It follows
that $\lambda (\cdot )$ is continuously differentiable as a function of $%
\eta $ in a neighborhood of the origin, this is a consequence of Assumptions %
\ref{asu-cvx-licq} and \ref{asu-cvx-strict-complementary}. Both $\beta (\eta
)$ and $\lambda (\eta )$ are characterized by the following KKT conditions
\begin{align}
\nabla f(\beta \left( \eta \right) )+\nabla G\left( \beta \left( \eta
\right) \right) \lambda \left( \eta \right) & =-\eta ,  \label{Akkt1} \\
G\left( \beta \left( \eta \right) \right) & \leq 0,  \label{Akkt2} \\
\lambda \left( \eta \right) ^{T}G\left( \beta \left( \eta \right) \right) &
=0,  \label{Akkt3} \\
\lambda \left( \eta \right) & \geq 0.  \label{Akkt4}
\end{align}%
By one of the KKT optimality conditions specified in (\ref{kkt-1}) for the
SAA problem we have that
\begin{equation}
0=\frac{1}{2^{n}}\sum_{i=1}^{2^{n}}\nabla _{\beta }F\left( \beta
_{2^{n}},X_{i}\right) +\nabla G\left( \beta _{2^{n}}\right) \lambda _{2^{n}}.
\label{Emp_KKT}
\end{equation}%
The previous equality implies that
\begin{equation*}
\nabla _{\beta }f\left( \beta _{2^{n}}\right) +\nabla _{\beta }G\left( \beta
_{2^{n}}\right) \cdot \lambda _{2^{n}}=-\bar{\eta}_{2^{n}},
\end{equation*}%
where
\begin{equation*}
\bar{\eta}_{2^{n}}=\frac{1}{2^{n}}\sum_{i=1}^{2^{n}}\left( \nabla _{\beta
}F\left( \beta _{2^{n}},X_{i}\right) -\nabla _{\beta }f\left( \beta
_{2^{n}}\right) \right) .
\end{equation*}%
Written in this form, we can identify that $\beta _{2^{n}}=\beta \left( \bar{%
\eta}_{2^{n}}\right) $ and $\lambda _{2^{n}}=\lambda \left( \bar{\eta}%
_{2^{n}}\right) $. We already know that $\left\{ \beta _{2^{n}}:n\geq
0\right\} $ has a large deviations principle, so the LDP can be derived for $%
\left\{ \bar{\eta}_{2^{n}}:n\geq 0\right\} $ by Theorem 2.1 of \cite%
{GaoZhao:2011} with Assumption \ref{asu-cvx-gradient}. Furthermore, the LDP
can then be derived for the Lagrange multipliers $\{\lambda _{2^{n}}=\lambda
(\bar{\eta}_{2^{n}}):n\geq 0\}$ because $\lambda (\cdot )$ is continuously
differentiable as a function of $\eta $ in a neighborhood of the origin, as
we mensioned earlier.

Then, from (\ref{Emp_KKT}) it follows by Taylor expansion that
\begin{align}
0=& \frac{1}{2^{n}}\sum_{i=1}^{2^{n}}\nabla _{\beta }F\left( \beta
_{2^{n}},X_{i}\right) +\nabla G\left( \beta _{2^{n}}\right) \lambda _{2^{n}}
\notag  \label{eq-optimizer} \\
=& \frac{1}{2^{n}}\sum_{i=}^{2^{n}}\nabla _{\beta }F\left( \beta _{\ast
},X_{i}\right) +\frac{1}{2^{n}}\sum_{i=1}^{2^{n}}\left( \nabla _{\beta
}F\left( \beta _{2^{n}},X_{i}\right) -\nabla _{\beta }F\left( \beta _{\ast
},X_{i}\right) \right) +\nabla G\left( \beta _{2^{n}}\right) \lambda _{2^{n}}
\notag \\
=& \frac{1}{2^{n}}\sum_{i=1}^{2^{n}}\nabla _{\beta }F\left( \beta _{\ast
},X_{i}\right) +\nabla G(\beta _{\ast })\lambda _{\ast }+\left( \frac{1}{%
2^{n}}\sum_{i=1}^{2^{n}}\nabla _{\beta \beta }^{2}F\left( \beta _{\ast
},X_{i}\right) -m(\beta _{\ast })\right) \cdot \left( \beta _{2^{n}}-\beta
_{\ast }\right)  \notag \\
& +\left( m(\beta _{\ast })+\lambda _{\ast }^{T}\nabla ^{2}G(\beta _{\ast
})\right) \cdot \left( \beta _{2^{n}}-\beta _{\ast }\right) +\nabla G(\beta
_{\ast })\left( \lambda _{2^{n}}-\lambda _{\ast }\right)  \notag \\
& +\bar{R}_{n,(\beta ,\beta )}+\bar{R}_{n,(\lambda ,\lambda )}+\bar{R}%
_{n,(\beta ,\lambda )},
\end{align}%
where
\begin{align*}
\bar{R}_{n,(\beta ,\beta )}& =O(\lVert \beta _{2^{n}}-\beta _{\ast }\rVert
^{2}), \\
\bar{R}_{n,(\lambda ,\lambda )}& =O\left( \lVert \lambda _{2^{n}}-\lambda
_{\ast }\rVert ^{2}\right) , \\
\bar{R}_{n,(\beta ,\lambda )}& =O\left( \lVert \beta _{2^{n}}-\beta _{\ast
}\rVert \lVert \lambda _{2^{n}}-\lambda _{\ast }\rVert \right) .
\end{align*}%
Let
\begin{equation*}
\bar{R}_{n}=\left( \frac{1}{2^{n}}\sum_{i=1}^{2^{n}}\nabla _{\beta \beta
}^{2}F\left( \beta _{\ast },X_{i}\right) -m(\beta _{\ast })\right) \cdot
\left( \beta _{2^{n}}-\beta _{\ast }\right)
\end{equation*}%
and let $\Lambda _{1}=m(\beta _{\ast })+\lambda _{\ast }T\nabla ^{2}G(\beta
_{\ast })\in \mathbb{R}_{d\times d}$, $\Lambda _{2}=\nabla G\left( \beta
_{\ast }\right) \in \mathbb{R}_{d\times m}$. Then we can rewrite (\ref%
{eq-optimizer}) as
\begin{equation*}
\Lambda _{1}(\beta _{2^{n}}-\beta _{\ast })+\Lambda _{2}(\lambda
_{2^{n}}-\lambda _{\ast })=-\left( \frac{1}{2^{n}}\sum_{i=1}^{2^{n}}\nabla
_{\beta }F\left( \beta _{\ast },X_{i}\right) +\nabla G\left( \beta _{\ast
}\right) \lambda _{\ast }+\bar{R}_{n}+\bar{R}_{n,(\beta ,\beta )}+\bar{R}%
_{n,(\lambda ,\lambda )}+\bar{R}_{n,(\beta ,\lambda )}\right) .
\end{equation*}%
Note that by Holder's inequality,
\begin{align}
E\left[ \bar{R}_{n}\bar{R}_{n}^{T}\right] & \leq E\left[ \left\Vert \frac{1}{%
2^{n}}\sum_{i=1}^{2^{n}}\nabla _{\beta \beta }^{2}F\left( \beta _{\ast
},X_{i}\right) -m(\beta _{\ast })\right\Vert ^{2}\cdot \left\Vert \beta
_{2^{n+1}}-\beta _{\ast }\right\Vert ^{2}\right]  \notag  \label{holder-1} \\
& \leq E\left[ \left\Vert \frac{1}{2^{n}}\sum_{i=1}^{2^{n}}\nabla _{\beta
\beta }^{2}F\left( \beta _{\ast },X_{i}\right) -m(\beta _{\ast })\right\Vert
^{p}\right] ^{2/p}E\left[ \lVert \beta _{2^{n+1}}-\beta _{\ast }\rVert
^{2p/(p-2)}\right] ^{(p-2)/p},
\end{align}%
where
\begin{equation}
E\left[ \left\Vert \frac{1}{2^{n}}\sum_{i=1}^{2^{n}}\nabla _{\beta \beta
}^{2}F\left( \beta _{\ast },X_{i}\right) -m(\beta _{\ast })\right\Vert ^{p}%
\right] =O\left( 2^{-np/2}\right)  \label{holder-2}
\end{equation}%
by \cite{Bahr:1965}, and by using the moderate large deviation estimate
\begin{align}
& E\left[ \lVert \beta _{2^{n+1}}-\beta _{\ast }\rVert ^{2p/(p-2)}\right]
\notag  \label{holder-3} \\
=& E\left[ \lVert \beta _{2^{n+1}}-\beta _{\ast }\rVert ^{2p/(p-2)}I\left(
\lVert \beta _{2^{n+1}}-\beta _{\ast }\rVert \geq 2^{-\rho n}\right) \right]
+E\left[ \lVert \beta _{2^{n+1}}-\beta _{\ast }\rVert ^{2p/(p-2)}I\left(
\lVert \beta _{2^{n+1}}-\beta _{\ast }\rVert <2^{-\rho n}\right) \right]
\notag \\
\leq & c^{\prime }\exp \left( -\alpha _{0}(1-2\rho )n\right) +2^{-\frac{%
2p\rho n}{p-2}}=O\left( 2^{-\frac{2p\rho n}{p-2}}\right) .
\end{align}%
Combining them together we get $E\left[ \bar{R}_{n}\bar{R}_{n}^{T}\right]
=O\left( 2^{-(1+2\rho )n}\right) $. Similarly we can get
\begin{align*}
E\left[ \bar{R}_{n,(\beta ,\beta )}\bar{R}_{n,(\beta ,\beta )}^{T}\right] &
=O\left( 2^{-4\rho n}\right) , \\
E\left[ \bar{R}_{n,(\lambda ,\lambda )}\bar{R}_{n,(\lambda ,\lambda )}^{T}%
\right] & =O\left( 2^{-4\rho n}\right) , \\
E\left[ \bar{R}_{n,(\beta ,\lambda )}\bar{R}_{n,(\beta ,\lambda )}^{T}\right]
& =O\left( 2^{-4\rho n}\right) .
\end{align*}

Because
\begin{align*}
&\Lambda_1\left(\beta_{2^{n+1}}-\frac{1}{2}\left(\beta_{2^n}^O+\beta_{2^n}^E%
\right)\right)+\Lambda_2\left(\lambda_{2^{n+1}}-\frac{1}{2}%
\left(\lambda_{2^n}^O+\lambda_{2^n}^E\right)\right) \\
=&\bar{R}_{n+1}-\frac{1}{2}\left(\bar{R}_{n}^O+\bar{R}_{n}^E\right)+\bar{R}%
_{n+1,(\beta,\beta)}-\frac{1}{2}\left(\bar{R}^O_{n,(\beta,\beta)}+\bar{R}%
^E_{n,(\beta,\beta)}\right) \\
& + \bar{R}_{n+1,(\lambda,\lambda)} -\frac{1}{2}\left(\bar{R}%
^O_{n,(\lambda,\lambda)}+\bar{R}^E_{n,(\lambda,\lambda)}\right)+ \bar{R}%
_{n+1,(\beta,\lambda)}-\frac{1}{2}\left(\bar{R}^O_{n,(\beta,\lambda)}+\bar{R}%
^E_{n,(\beta,\lambda)}\right),
\end{align*}
we have that $E\left[\bar{\Delta}_n\bar{\Delta}_n^T\right]=O\left(2^{-4\rho
n}\right)$. Note that $\rho\in(1/4,1/2)$, it satisfies Assumption \ref%
{asu-general1} of the general principles of unbiased estimators in Section %
\ref{sec:general-principle}.

The computational cost for producing $\bar{\Delta}_n$, denoted by $C_n$, is
of order $O(2^n)$. After generating $2^{n+1}$ iid copies of $X$'s, we can
use Newton's method or other root-finding algorithms to solve the KKT
condition for optimal solution, or use other classic tools such as
subgradient method or interior point method.
\end{proof}

\subsection{Unbiased estimator of optimal value}

\label{unbiased-value}

\begin{theorem}
\label{unbiased-optimal} If Assumption \ref{convex-assumptions} is in force,
then $E\left[Z\right]=f_*$, $Var(Z)<\infty$ and the computation complexity
required to produce $Z$ is bounded in expectation.
\end{theorem}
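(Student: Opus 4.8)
The plan is to verify that the increments $\Delta_n=\hat f_{2^{n+1}}-\tfrac12\big(\hat f^O_{2^n}+\hat f^E_{2^n}\big)$ satisfy the three parts of Assumption~\ref{general-principle-assumptions} and then invoke the construction of Section~\ref{sec:general-principle}. Parts \ref{asu-general2} and \ref{asu-general3} are the routine ones. Since $\hat f^O_{2^n}$ and $\hat f^E_{2^n}$ each have the law of $\hat f_{2^n}$, one has $E[\Delta_n]=E[\hat f_{2^{n+1}}]-E[\hat f_{2^n}]$, so that $\sum_{n\ge0}E[\Delta_n]+E[\hat f_1]=\lim_n E[\hat f_{2^n}]$; this limit is $f_*$ once we know $\hat f_{2^n}\to f_*$ a.s.\ (consistency of SAA, which follows from Assumptions~\ref{asu-cvx-compact} and \ref{asu-cvx-uniqueness}) and that $\{\hat f_{2^n}\}$ is uniformly integrable. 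The latter, and more generally $\sup_n E[|\hat f_{2^n}|^{q}]<\infty$ for every $q\ge1$, follows from $|\hat f_{2^n}|\le 2^{-n}\sum_{i\le 2^n}\sup_{\beta\in\mathcal D}|F(\beta,X_i)|$ together with $\sup_{\beta\in\mathcal D}|F(\beta,X)|\le |F(\beta_0,X)|+\kappa(X)\,\mathrm{diam}(\mathcal D)^{\gamma}$, whose right-hand side has all moments by Assumptions~\ref{asu-cvx-hcalm} and \ref{asu-cvx-mgf}. The cost of producing $\Delta_n$ is $C_n=O(2^{n+1})$, exactly as in Theorem~\ref{unbiased-optimizer}, so $E[C_N]<\infty$ because $r>1/2$.

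The heart of the proof is Assumption~\ref{asu-general1}, namely $E[\Delta_n^2]=O(2^{-(1+\alpha)n})$ for some $\alpha>0$. The key is a deterministic two-sided bound on $\Delta_n$. Since $\hat f(\cdot)$, viewed as a function of the empirical measure, is concave (an infimum of measure-linear maps) and $\mu_{2^{n+1}}=\tfrac12(\mu^O_{2^n}+\mu^E_{2^n})$, one gets $\Delta_n\ge0$; equivalently, use $f_{2^{n+1}}(\cdot)=\tfrac12\big(f^O_{2^n}(\cdot)+f^E_{2^n}(\cdot)\big)$ and optimality of $\beta^O_{2^n},\beta^E_{2^n}$. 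For the upper bound, convexity of $F(\cdot,X)$ yields $f^O_{2^n}(\beta_{2^{n+1}})-f^O_{2^n}(\beta^O_{2^n})\le \nabla f^O_{2^n}(\beta_{2^{n+1}})^{T}(\beta_{2^{n+1}}-\beta^O_{2^n})$ and the analogous bound for the even block; averaging, using $\nabla f_{2^{n+1}}=\tfrac12(\nabla f^O_{2^n}+\nabla f^E_{2^n})$, and separating the resulting bilinear form into symmetric and antisymmetric parts gives
\begin{equation*}
0\le\Delta_n\le\nabla f_{2^{n+1}}(\beta_{2^{n+1}})^{T}\Big(\beta_{2^{n+1}}-\tfrac12\big(\beta^O_{2^n}+\beta^E_{2^n}\big)\Big)+\tfrac14\big(\nabla f^O_{2^n}(\beta_{2^{n+1}})-\nabla f^E_{2^n}(\beta_{2^{n+1}})\big)^{T}\big(\beta^E_{2^n}-\beta^O_{2^n}\big).
\end{equation*}
The first term on the right is $\le0$: by the KKT conditions for the $2^{n+1}$-sample SAA problem one has $\nabla f_{2^{n+1}}(\beta_{2^{n+1}})=-\nabla G(\beta_{2^{n+1}})\lambda_{2^{n+1}}$, and convexity of the $g_i$'s, feasibility of $\tfrac12(\beta^O_{2^n}+\beta^E_{2^n})$, complementary slackness and $\lambda_{2^{n+1}}\ge0$ force the corresponding inner product to be nonpositive. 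Hence $\Delta_n^2\le\tfrac1{16}\,\big\|\nabla f^O_{2^n}(\beta_{2^{n+1}})-\nabla f^E_{2^n}(\beta_{2^{n+1}})\big\|^2\,\big\|\beta^E_{2^n}-\beta^O_{2^n}\big\|^2$, and by Cauchy--Schwarz it suffices to control each factor in $L^4$.

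For $\|\beta^E_{2^n}-\beta^O_{2^n}\|$, the triangle inequality around $\beta_*$ plus the moderate-deviation estimate already established in the proof of Theorem~\ref{unbiased-optimizer} (a consequence of Lemma~\ref{ldp-lemma}) gives $E\big[\|\beta^O_{2^n}-\beta_*\|^4\big]=O(2^{-(2-\epsilon)n})$ for any small $\epsilon>0$. For $\nabla f^O_{2^n}(\beta_{2^{n+1}})-\nabla f^E_{2^n}(\beta_{2^{n+1}})$, I would write it as $\big(\nabla f^O_{2^n}(\beta_*)-\nabla f^E_{2^n}(\beta_*)\big)+\big(\nabla f^O_{2^n}(\beta_{2^{n+1}})-\nabla f^O_{2^n}(\beta_*)\big)-\big(\nabla f^E_{2^n}(\beta_{2^{n+1}})-\nabla f^E_{2^n}(\beta_*)\big)$: the first bracket is a difference of two independent centered sample means of $\nabla_\beta F(\beta_*,X)$, hence $O(2^{-n/2})$ in every $L^{q}$ by Assumption~\ref{asu-cvx-gradient} and \cite{Bahr:1965}, while the remaining two brackets equal $\nabla^{2} f^{O}_{2^n}(\tilde\xi)(\beta_{2^{n+1}}-\beta_*)$ and $\nabla^{2} f^{E}_{2^n}(\tilde\xi)(\beta_{2^{n+1}}-\beta_*)$ respectively, and on the overwhelmingly likely event that the SAA minimizers all lie in a fixed neighborhood of $\beta_*$ they are bounded by a local uniform law of large numbers for the Hessians (Assumption~\ref{asu-cvx-hessian}) times $\|\beta_{2^{n+1}}-\beta_*\|=O_p(2^{-n/2})$. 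The contribution of the complementary large-deviation event is absorbed by Hölder's inequality, the exponential bound of Lemma~\ref{ldp-lemma}, and the uniform moment bounds on $\sup_{\beta\in\mathcal D}\|\nabla_\beta F(\beta,X)\|$, exactly as the ``tail'' term is handled in the proof of Theorem~\ref{thm-function-expectation}. Combining the pieces yields $E[\Delta_n^2]=O(2^{-(2-\epsilon)n})$, so Assumption~\ref{asu-general1} holds (with $\alpha=1/2$, say), and the general principle of Section~\ref{sec:general-principle} then gives $E[Z]=f_*$, $Var(Z)<\infty$, and bounded expected cost. The main obstacle is the $L^4$ control of $\nabla f^O_{2^n}(\beta_{2^{n+1}})-\nabla f^E_{2^n}(\beta_{2^{n+1}})$: because $\beta_{2^{n+1}}$ depends on both data blocks, these gradients are not centered sums, so the estimate must be routed through the Hessian expansion and the large-deviation split rather than through a direct martingale/CLT bound.
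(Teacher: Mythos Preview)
Your argument is correct and follows a genuinely different route from the paper's. The paper Taylor-expands each of $\hat f_{2^{n+1}}$, $\hat f^O_{2^n}$, $\hat f^E_{2^n}$ around $\beta_*$ term by term, obtaining $\Delta_n$ as a sum of (i) bilinear pieces of the form $\big(\tfrac{1}{2^n}\sum_i\nabla_\beta F(\beta_*,X_i)+\nabla G(\beta_*)\lambda_*\big)^T(\beta_{2^n}-\beta_*)$, (ii) quadratic remainders $R_n=O(\|\beta_{2^n}-\beta_*\|^2)$, and (iii) a term $-\lambda_*^T\nabla G(\beta_*)^T\bar\Delta_n$ that forces it to \emph{import} the conclusion $E[\bar\Delta_n\bar\Delta_n^T]=O(2^{-4\rho n})$ from Theorem~\ref{unbiased-optimizer}. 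Your approach replaces that machinery by the deterministic sandwich $0\le\Delta_n\le\tfrac14\big(\nabla f^O_{2^n}(\beta_{2^{n+1}})-\nabla f^E_{2^n}(\beta_{2^{n+1}})\big)^T(\beta^E_{2^n}-\beta^O_{2^n})$, which is cleaner: the lower bound is free (an infimum is concave in the empirical measure), and the KKT-plus-convexity argument that kills the first term of your upper bound is exact, not an expansion. The price is that the gradient difference is evaluated at the random point $\beta_{2^{n+1}}$, so you must route through the Hessian and a large-deviation split---precisely the obstacle you flag. Both proofs ultimately rest on the same moderate-deviation input (Lemma~\ref{ldp-lemma}), and both implicitly need Hessian control in a neighborhood of $\beta_*$ (the paper when it writes $R_n=O(\|\beta_{2^n}-\beta_*\|^2)$ with a deterministic constant, you when you invoke a local uniform law for $\nabla^2 f^O_{2^n}$); Assumption~\ref{asu-cvx-hessian} as stated gives only the $p$-th moment at $\beta_*$, so in either case one should read it as holding locally uniformly. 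What your approach buys is that Theorem~\ref{unbiased-optimal} becomes logically independent of the $\bar\Delta_n$ estimate in Theorem~\ref{unbiased-optimizer}; what the paper's approach buys is that no care about random evaluation points is needed, since everything is expanded at the fixed $\beta_*$.
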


\begin{proof}
Finite expected computation complexity of producing $\Delta_n$ has been
discussed in the proof to Theorem \ref{unbiased-optimizer}. We now show the
unbiasedness of estimator $Z$. Since $f_{n}(\beta)=\frac{1}{n}%
\sum_{i=1}^nF(\beta,X_i)\to f(\beta)$, uniformly on $\mathcal{D}$, with the
result of Proposition 5.2 in \cite{Shapiro:2009} we have $\hat{f}_n\to f_*$
w.p.1 as $n\to\infty$. If Assumption \ref{asu-cvx-mgf} is in force, $\{%
\hat{f}_{2^n}:n\ge0\}$ is uniformly integrable, hence
\begin{equation*}
E\left[Z\right]=\lim_{n\to\infty}E\left[\hat{f}_{2^n}\right]=f_*.
\end{equation*}

We next prove the estimator $Z$ has finite variance by showing $E\left[%
\Delta^2\right]=O(2^{-4\rho n})$ with $\rho>1/4$. By Taylor expansion around
the unique true optimal solution $\beta_*$ and the KKT condition,
\begin{align*}
\Delta_n=& f_{2^{n+1}}\left(\beta_{2^{n+1}}\right)-\frac{1}{2}%
\left(f_{2^n}^O\left(\beta_{2^n}^O\right)+f_{2^n}^E\left(\beta_{2^n}^E%
\right)\right) \\
=&\frac{1}{2^{n+1}}\sum_{i=1}^{2^{n+1}}F\left(\beta_{2^{n+1}},X_i\right)-%
\frac{1}{2}\left(\frac{1}{2^n}\sum_{i=1}^{2^n}F\left(\beta_{2^n}^O,X_i^O%
\right)+\frac{1}{2^n}\sum_{i=1}^{2^n}F\left(\beta^E_{2^n},X_i^E\right)\right)
\\
=&\left(\frac{1}{2^{n+1}}\sum_{i=1}^{2^{n+1}}\nabla_{\beta}F\left(%
\beta_*,X_i\right)+\nabla
G\left(\beta_*\right)\lambda_*\right)^T\left(\beta_{2^{n+1}}-\beta_*%
\right)+R_{n+1} \\
&-\frac{1}{2}\left(\frac{1}{2^n}\sum_{i=1}^{2^n}\nabla_{\beta}F\left(%
\beta_*,X_i^O\right)+\nabla
G\left(\beta_*\right)\lambda_*\right)^T\left(\beta^O_{2^n}-\beta_*\right)+
R^O_{n} \\
&-\frac{1}{2}\left(\frac{1}{2^n}\sum_{i=1}^{2^n}\nabla_{\beta}F\left(%
\beta_*,X_i^E\right)+\nabla
G\left(\beta_*\right)\lambda_*\right)^T\left(\beta^E_{2^n}-\beta_*%
\right)+R^E_n \\
&-\lambda_*^T\nabla G\left(\beta_*\right)^T\left(\beta_{2^{n+1}}-\frac{1}{2}%
\left(\beta_{2^n}^O+\beta_{2^n}^E\right)\right),
\end{align*}
where $R_{n+1}=O\left(\left\|\beta_{2^{n+1}}-\beta_*\right\|^2\right)$, $%
R^O_{n}=O\left(\left\|\beta_{2^n}^O-\beta_*\right\|^2\right)$ and $%
R^E_n=O\left(\left\|\beta_{2^n}^E-\beta_*\right\|^2\right) $. By using the
moderate LDP explained in the proof of Theorem \ref{unbiased-optimizer},
with $\rho\in(1/4,1/2)$, we have
\begin{equation*}
E\left[R_n^2\right] \le c_1E\left[\left\|\beta_{2^{n}}-\beta_*\right\|^4I%
\left(\lVert\beta_{2^n}-\beta_*\rVert\ge 2^{-\rho n}\right)\right]+c_2E\left[%
\left\|\beta_{2^{n}}-\beta_*\right\|^4I\left(\lVert\beta_{2^n}-\beta_*%
\rVert< 2^{-\rho n}\right)\right]=O\left(2^{-4\rho n}\right).
\end{equation*}
Also similar analysis as (\ref{holder-1}) (\ref{holder-2}) and (\ref%
{holder-2}) in the proof of Theorem \ref{unbiased-optimizer} yields
\begin{equation*}
E\left[\left(\left(\frac{1}{2^{n+1}}\sum_{i=1}^{2^{n+1}}\nabla_{\beta}F%
\left(\beta_*,X_i\right)+\nabla
G\left(\beta_*\right)\lambda_*\right)^T\left(\beta_{2^{n+1}}-\beta_*\right)%
\right)^2\right] = O\left(2^{-(1+2\rho)n}\right)
\end{equation*}
Combining the fact that $E\left[\bar{\Delta}_n\bar{\Delta}_n^T\right]%
=O\left(2^{-4\rho n}\right)$, we finally get $E\left[\Delta_n^2\right]%
=O\left(2^{-4\rho n}\right)$.
\end{proof}

\subsection{Applications and numerical examples}

\subsubsection{Linear Regression}

Linear regression is to solve the following optimization problem
\begin{equation}  \label{linear-regression}
\min_{\beta\in\mathbb{R}^{p+1}}MSE=E_{\mu}\left[F(\beta,(X,y))\right]=E_{\mu}%
\left[\left(y-X^T\beta\right)^2\right],
\end{equation}
where $X\in\mathbb{R}^{p+1}$ is called independent variables, whose first
coordinate is $1$, and $y$ is real valued response called dependent
variable. The pair $(X,y)$ is from distribution $\mu$. The goal is to find
the optimal $\beta_*$ that minimizes the mean-squared-error (MSE).

In many of the real-world problems, we normally have the distribution $\mu $
being the empirical measure of all the data available $\{(X_{i},y_{i}):1\leq
i\leq n_{0}\}$, where $n_{0}$ denote the total number of data points we
have. When $n_{0}$ is enormous, it would be difficult and slow to load all
the data and do computation at once. Like we mentioned in previous sections,
we can take a subsample of the whole dataset to solve the corresponding SAA
linear regression, but it results significant estimation bias. With the
unbiased estimators (\ref{cvx-value-estimator}) and (\ref%
{cvx-solution-estimator}), we can take relatively small subsamples and solve
them on multiple processors in parallel, without any bias.

We have $F(\beta,(X,y))=(y-X^T\beta)^2$ strictly convex and twice
continuously differentiable in $\beta$, so the optimizer is unique. To have
all the required conditions listed in Assumption \ref{convex-assumptions}
satisfied, we can let $G\left(\beta\right)=\left(g_1\left(\beta\right),g_1(%
\beta),\ldots, g_{2(p+1)}(\beta)\right)^T$ with $g_{2i-1}(\beta)=e_i^T%
\beta-M $ and $g_{2i}(\beta)=-e_i^T\beta-M$ for $1\le i\le p+1$, with $M>0$
sufficiently large, so that the unique optimizer $\beta_*$ is in the
interior of $\mathcal{D}=\left\{\beta\in\mathbb{R}^{p+1}:G(\beta)\le
0\right\}$. Then all the conditions follow naturally.

The numerical experiment is to test how the unbiased estimators perform on
some real-world dataset. We use Beijing air pollution data (downloaded from
the website of UCI machine learning repository), which has $43824$ data
points, real-valued PM2.5 concentration and $11$ real-valued independent
variables including time of a day, temperature, pressure, wind direction and
speed, etc. We first use the entire dataset to get the true optimal solution
$\beta_*$ and optimal value $f_*$ as baselines of the experiment. Then we
repeat the SAA approach and our unbiased method for $10000$ times; for the
SAA problem, each time we randomly sample a subset of fixed size, while for
the unbiased method we randomly sample a subset of size $2^{N+1}$ with $N$
geometrically distributed in $\{B,B+1,B+2,\ldots\}$. We call such integer
value $B$ ``burning size". In Chapter \ref{sec:general-principle} we have $%
B=0$, which leads to the smallest possible dataset we can get is of size $1$%
. To better control the variance, our experiment uses $B=10$.

\begin{figure}[]
\caption{Linear regression test on Beijing's PM2.5 data}
\label{fig-LR}\center
\includegraphics[width = 1\linewidth]{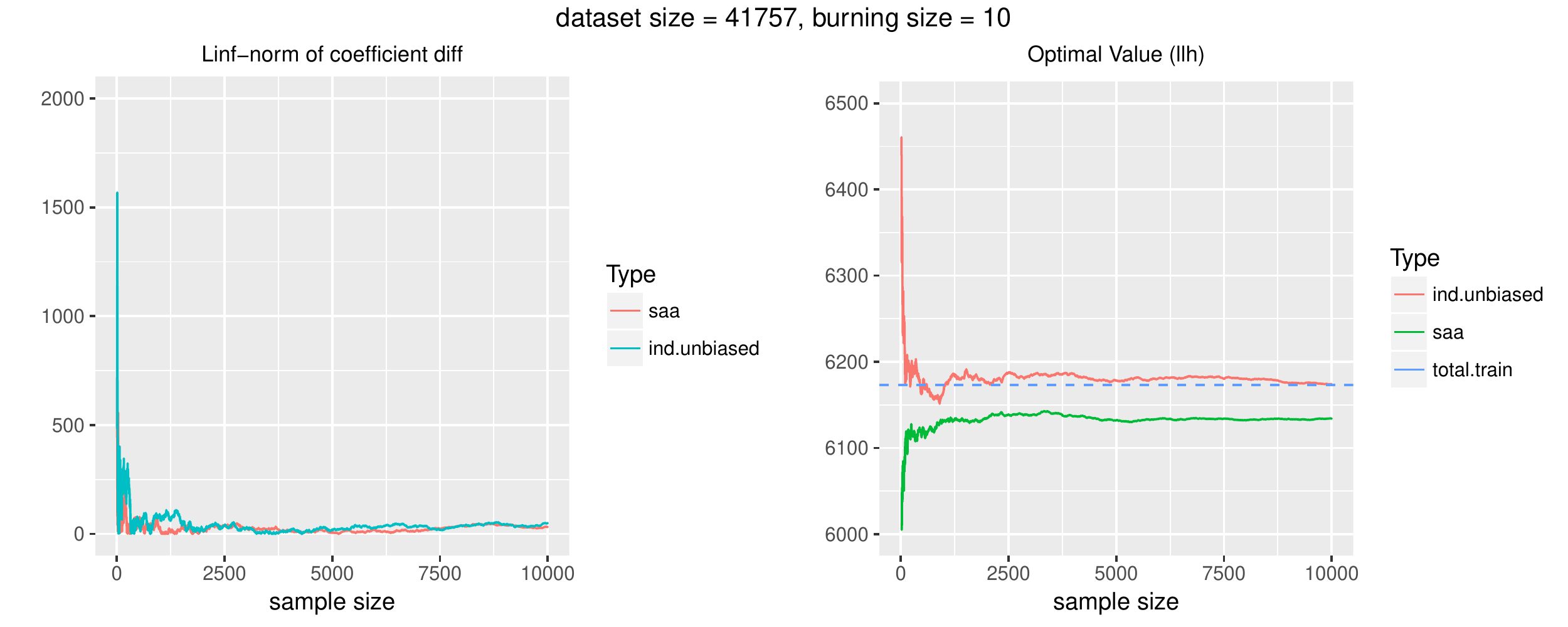}
\end{figure}

The left plot of Figure \ref{fig-LR} has two curves. The red curve shows how
$\left\|\beta_{SAA}-\beta_*\right\|_{\infty}$ changes as we increase the
number of replications, whereas the blue curve shows the same $l_{\infty}$
distance between the mean of the unbiased estimators and $\beta_*$. At the
beginning, both estimators are volatile, though the SAA estimator has
relatively smaller variance than the unbiased estimator, but they stabilize
when the number of replications is around $2500$ and finally are both close
enough to the true optimizer $\beta_*$. The right plot of Figure \ref{fig-LR}
shows how the optimal value estimators from SAA and unbiased method perform
as we increase the number of replications. The blue dashed horizontal line
indicates the level of true optimal value $f_*$, i.e. the MSE computed by
using the entire dataset), the green curve corresponds to the average MSE of
SAA problems and the red curve corresponds to the average MSE of unbiased
method. Clearly the unbiased estimator outperform the other as it gets close
to $f_*$ after some initial fluctuation, however the SAA estimator gives
consistent negative bias, which verifies the theoretic results given in the
SAA literature as we mentioned earlier.

\subsubsection{Regularized Regressions}

Two classic regularized regression techniques are Ridge and LASSO, with $l_2$
and $l_1$ penalty added to the target value function respectively. Ridge
regression is to solve the following optimization problem
\begin{equation}  \label{ridge}
\min_{\beta\in\mathbb{R}^{p+1}}E_{\mu}\left[\left(y-X^T\beta\right)^2\right]%
+\lambda\left\|\beta\right\|_2^2,
\end{equation}
where $\lambda\ge0$ is the shrinkage parameter. An equivalent way to express
the Ridge regression is
\begin{equation*}
\begin{matrix}
\displaystyle \min & E_{\mu}\left[\left(y-X^T\beta\right)^2\right] \\
\text{s.t.} & \beta^T\beta\le t,%
\end{matrix}%
,
\end{equation*}
where $t\propto 1/\lambda$ has one-to-one correspondence with each shrinkage
parameter $\lambda$ in (\ref{ridge}).

Similarly we can express LASSO as
\begin{equation}  \label{lasso}
\begin{matrix}
\displaystyle \min & E_{\mu}\left[\left(y-X^T\beta\right)^2\right] &  \\
\text{s.t.} & (-1)^{r_1}\beta_1+\ldots+(-1)^{r_{p}}\beta_p\le t, &
r_i\in\{0,1\}\ \mbox{ for all }\ i=1,\ldots,p,%
\end{matrix}%
\end{equation}
with $t\ge0$.

For both problem we can verify the conditions in Assumptions \ref%
{convex-assumptions} and use the method proposed to construct unbiased
estimators.

\subsubsection{Logistic Regression}

Logistic regression is to solve the following optimization problem
\begin{equation}  \label{logistic-form}
\min_{\beta\in\mathbb{R}^n}f(\beta)=E_{\mu}\left[F\left(\beta,(X,y)\right)%
\right]=E_{\mu}\left[-\log\left(1+\exp\left(-y\beta^TX\right)\right)\right],
\end{equation}
where $X\in\mathbb{R}^{p+1}$ has its first coordinate being $1$, and $%
y\in\{-1,1\}$ is the label of the class that the data point $(X,y)$ falls
in. The pair $(X,y)$ is from some distribution $\mu.$ The classic logistic
regression is to find the optimal coefficient $\beta_*$ to maximize the
log-likelihood, and we give in (\ref{logistic-form}) an equivalent problem
to minimize the negative of the log-likelihood, i.e., $\min f(\beta)$ with $%
f $ being strict convex.

We run an numerical experiment to check how our unbiased estimators perform,
compared to the SAA estimators of both the optimal solution and the optimal
objective value. The dataset we use is some online advertising campaign data
from Yahoo research, which has $2801523$ data points, each has $22$
real-valued features and one response $y\in\{-1,1\}$ indicating whether it
is a click or not. We first use the entire dataset to get the true optimal
solution $\beta_*$ and optimal value $f_*$ as baselines. Then, for the SAA
method and our unbiased estimating method, we run $10000$ replications each
to see whether they are able to produce a good estimation to $\beta_*$ and $%
f_*$. Again we use the burning size $B$ equal to $10$ here.

\begin{figure}[]
\caption{Logistic regression test on AOL's campaign data}
\label{fig-logistic}\center
\includegraphics[width =
1\linewidth]{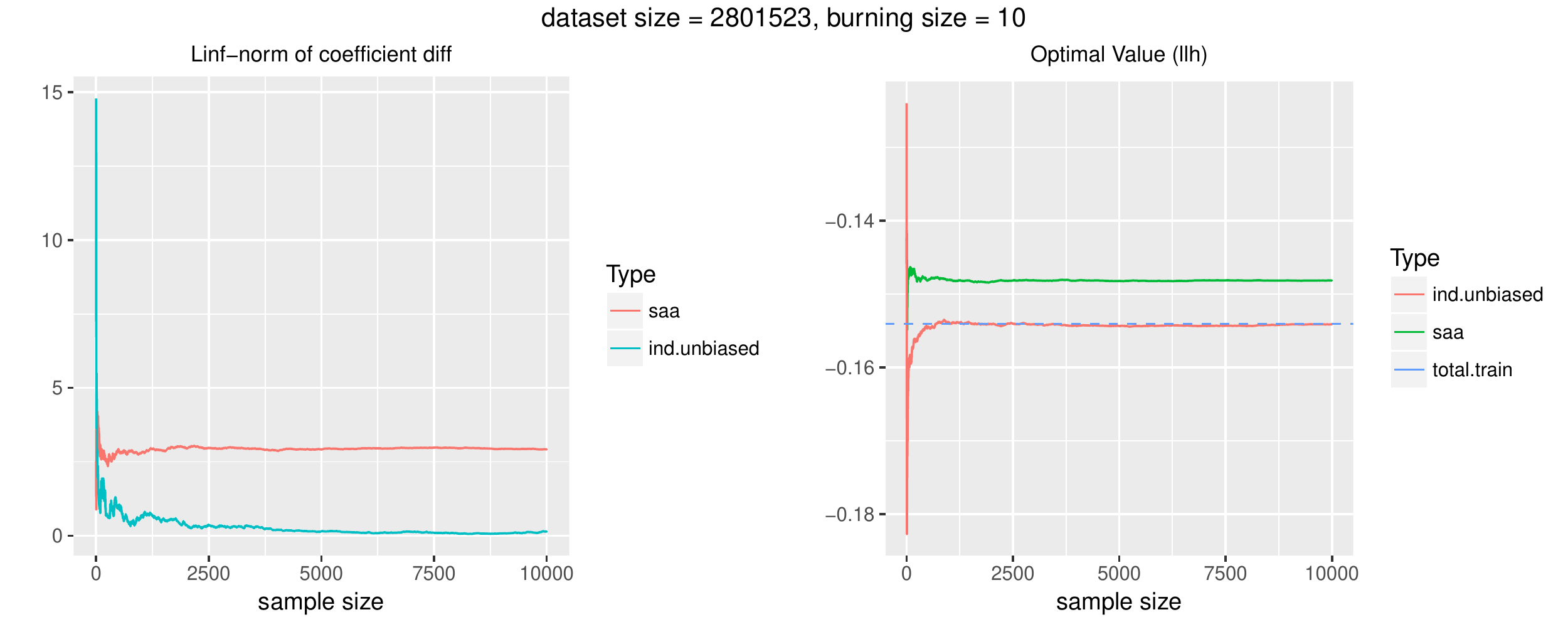}
\end{figure}

In Figure \ref{fig-logistic}, the left plot shows how SAA estimator (in red)
and the unbiased estimator (in blue) approach the true optimal solution $%
\beta_*$ as we increase the size of replications, and similarly the right
plot shows the performance of both estimators for the optimal value $f_*$,
which is represented by the level of the blue dashed line. In both cases,
our unbiased estimators beat the SAA estimators in terms of unbiasedness.

\section{Quantile Estimation}

\label{sec:quantile}

Suppose $\left( X_{k}:k\geq 1\right) $ are iid with cumulative distribution
function $F\left(x\right)=\mu ((-\infty ,x])=P\left( X\leq x\right) $ for $%
x\in \mathbb{R}$. We define $x_{p}=x_{p}\left( \mu \right) =\inf \{x\geq
0:F\left( x\right) \geq p\}$ to be the $p$-quantile of distribution $\mu$
for any given $0<p<1$. If $F\left( \cdot \right) $ is continuous we have
that
\begin{equation*}
F(x_{p})=p.
\end{equation*}%
Connecting to the general framework from Section \ref{sec:general-principle}%
, here we have $\theta \left( \mu \right) :=x_{p}\left( \mu \right) $.

We first impose some assumptions.

\begin{asu}
\label{quantile-assumptions} Distributional quantile assumptions:

\refstepcounter{subassumption} \thesubassumption~\ignorespaces\label{asu-q1}
$F$ is at least twice differentiable in some neighborhood of $x_p$,

\refstepcounter{subassumption} \thesubassumption~\ignorespaces\label{asu-q2}
$F^{\prime \prime }(x)$ is bounded in the neighborhood,

\refstepcounter{subassumption} \thesubassumption~\ignorespaces\label{asu-q3}
$F^{\prime }(x_p)=f(x_p)>0$,

\refstepcounter{subassumption} \thesubassumption~\ignorespaces\label{asu-q4}
$E\left[X^2\right]<\infty$.
\end{asu}

Note that Assumptions \ref{asu-q1}, \ref{asu-q2} and \ref{asu-q3} ensure $%
x_p $ is the unique $p$-quantile of distribution $\mu$. By Bahadur
representation of sample quantiles in \cite{Bahadur_1966}, we have
\begin{equation}  \label{Bahadur-rep}
Y_n=x_p+\frac{np-Z_n}{nf_{\mu}\left(x_p\right)}+R_n,
\end{equation}
where
\begin{equation}  \label{e-sample-quantile}
Y_n=(1-w_n)X_{[np]}+w_nX_{[np]+1},~~~w_n=np-[np]\in[0,1),
\end{equation}
i.e., the sample $p-$quantile of sample $\left(X_1,\ldots,X_n\right)$, $%
Z_n=\sum_{i=1}^nI\left(X_i\le x_p\right)$ and $R_n=O\left(n^{-3/4}\log
n\right)$ as $n\to\infty$ almost surely.

\begin{lemma}
\label{quantile-ui} If Assumption \ref{asu-q4} is in force, $\sup_{n\ge1/p}E%
\left[Y_n^2\right]<\infty$.
\end{lemma}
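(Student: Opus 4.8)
The plan is to show that $\{Y_n^2 : n \ge 1/p\}$ is bounded in $L^1$ by controlling the sample $p$-quantile $Y_n$ via the order statistics $X_{[np]}$ and $X_{[np]+1}$. Since $Y_n = (1-w_n) X_{[np]} + w_n X_{[np]+1}$ with $w_n \in [0,1)$, convexity gives $Y_n^2 \le (1-w_n) X_{[np]}^2 + w_n X_{[np]+1}^2 \le X_{[np]}^2 + X_{[np]+1}^2$, so it suffices to bound $E[X_{[np]}^2]$ and $E[X_{[np]+1}^2]$ uniformly in $n$. First I would handle a single order statistic $X_{(k)}$ from a sample of size $n$ where $k = [np]$ or $k=[np]+1$, so that $k/n$ stays in a compact subinterval of $(0,1)$ (bounded away from $0$ by $p/2$ and from $1$ by $(1+p)/2$, say, for $n$ large; the finitely many small $n$ with $n \ge 1/p$ contribute only a fixed finite bound).

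The key step is a tail bound on $X_{(k)}$. Writing $E[X_{(k)}^2] = \int_0^\infty 2t\, P(|X_{(k)}| > t)\, dt$, I would split $P(|X_{(k)}| > t) \le P(X_{(k)} > t) + P(X_{(k)} < -t)$ and bound each piece. For the upper tail, $\{X_{(k)} > t\}$ means fewer than $k$ of the $X_i$ are $\le t$, i.e. $\sum_{i=1}^n I(X_i \le t) \le k-1$; since $E[\sum I(X_i \le t)] = nF(t)$ and $k-1 < np$, once $t$ is large enough that $F(t) > $ (some constant $> p$), this is a lower-deviation event for a binomial, and by a Chernoff/Hoeffding bound (or simply Chebyshev, using $\mathrm{Var} \le n/4$) it has probability $O(\exp(-cn))$ or at worst decays. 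But we cannot assume exponential control pushes the integral to be summable against $2t$ without a moment input, so the cleaner route is: compare $X_{(k)}$ directly to $X$. By a standard coupling/union argument, for the upper tail one shows $P(X_{(k)} > t) \le \binom{n}{n-k+1} P(X > t)^{\,n-k+1} \le C_n\, P(X > t)$ when $n - k + 1 \ge 1$, but the binomial coefficient blows up; instead I would use the comparison $E[X_{(k)}^2] \le C_p\, E[X^2]$ that follows from the fact that $X_{(k)}$ is stochastically dominated on each side by a fixed multiple involving $X$ when $k/n$ is bounded away from the endpoints. Concretely, the slick tool is: for any fixed $0<a<b<1$ there is a constant $C = C(a,b)$ with $E[|X_{(k)}|^2] \le C\, E[X^2]$ for all $n$ and all $k$ with $a \le k/n \le b$ — this is a classical fact provable by writing $X_{(k)} = F^{-1}(U_{(k)})$ with $U_{(k)}$ a uniform order statistic whose density is a Beta$(k, n-k+1)$ bounded by $C(a,b)$ times the uniform density away from $\{0,1\}$... but Beta densities are not uniformly bounded near their mode.

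The honest main obstacle, then, is exactly this uniform-in-$n$ moment comparison for order statistics at a fixed quantile level, and I expect the proof to resolve it as follows. Use the Bahadur representation \eqref{Bahadur-rep}: $Y_n = x_p + (np - Z_n)/(n f_\mu(x_p)) + R_n$ with $Z_n = \sum I(X_i \le x_p)$ a Binomial$(n,p)$ (under Assumption \ref{asu-q3}, $F(x_p)=p$), $R_n = O(n^{-3/4}\log n)$ a.s. The middle term has $E[(np-Z_n)^2/(n f_\mu(x_p))^2] = p(1-p)/(n f_\mu(x_p)^2) \to 0$, hence is bounded in $L^2$ uniformly in $n$. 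The genuine difficulty is that the a.s. bound on $R_n$ does not immediately give an $L^2$ bound (the $O(\cdot)$ hides a random constant, and the small-probability bad events where $Y_n$ is far from $x_p$ could a priori carry large $X$-values). So the argument must instead control $Y_n^2$ on the event $\{|Y_n - x_p| > \epsilon_0\}$ directly via the tail-of-order-statistic estimate above together with Assumption \ref{asu-q4}: on that rare event bound $Y_n^2 \le X_{(k)}^2 + X_{(k+1)}^2$ and use Cauchy--Schwarz, $E[Y_n^2\, I(|Y_n-x_p|>\epsilon_0)] \le (E[X_{(k)}^4 + X_{(k+1)}^4])^{1/2}(P(|Y_n-x_p|>\epsilon_0))^{1/2}$ — but this needs a fourth moment we do not have. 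The correct and economical fix, which I would adopt, is: bound $Y_n^2 \le 2(Y_n - x_p)^2 + 2x_p^2$ and show $\sup_n E[(Y_n - x_p)^2] < \infty$ by the representation $E[(Y_n-x_p)^2] \le 3 E[(np-Z_n)^2]/(n f_\mu(x_p))^2 + 3 E[R_n^2]$, reducing everything to proving $\sup_n E[R_n^2] < \infty$; and $E[R_n^2]$ is controlled because $R_n = (Y_n - x_p) - (np-Z_n)/(nf_\mu(x_p))$ and both $Y_n - x_p$ (via the order-statistic second moment, a.e. finite and, for $n$ in the compact-ratio regime, uniformly bounded by $C_p E[X^2]$) and the linear term are square-integrable — so the argument is essentially circular unless one inputs the order-statistic moment bound as the real lemma. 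Hence the proof I expect is: (i) reduce to $E[X_{[np]}^2] + E[X_{[np]+1}^2]$ via convexity; (ii) prove the single clean claim $\sup_{n\ge 1/p} E[X_{[np]}^2] < \infty$ and likewise for $[np]+1$, using the binomial tail identity $P(X_{(k)} > t) = P(\mathrm{Bin}(n, 1-F(t)) \ge n-k+1)$, splitting the defining integral $\int_0^\infty 2t P(|X_{(k)}|>t)dt$ at a threshold $t_0$ where $F(t_0)$ and $1-F(-t_0)$ are comfortably separated from $p$, bounding the binomial probabilities beyond $t_0$ by a constant multiple of $P(|X|>t)$ (this is where $k/n \to p \in (0,1)$ and the number of "exceptional" indices $\min(k, n-k+1) \ge cn \ge 1$ enter, giving $P(\mathrm{Bin}(n,q)\ge m) \le \binom{n}{m} q^m$ with $q = 1-F(t) \to 0$ and $m=n-k+1$; the bound $\binom{n}{m} q^m \le (enq/m)^m \le (C q)^m \le Cq = C\,P(X>t)$ whenever $q$ is small enough and $m \ge 1$); (iii) conclude $\int_{t_0}^\infty 2t\, P(|X_{(k)}|>t)\, dt \le C \int_{t_0}^\infty 2t\, P(|X|>t)\, dt \le C\, E[X^2] < \infty$ by Assumption \ref{asu-q4}, while the part below $t_0$ contributes at most $t_0^2$; and finally (iv) absorb the finitely many values $1/p \le n < n_0$ outside the compact-ratio regime into an additive constant. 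The binomial-coefficient estimate in step (ii) — getting $\binom{n}{n-k+1}(1-F(t))^{n-k+1} = O(P(X>t))$ with a constant uniform in $n$ — is the one place where care is needed and is, I believe, the main obstacle; everything else is bookkeeping.
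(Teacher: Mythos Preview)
Your final plan (i)--(iv) is correct, and it is a genuinely different route from the paper's. The paper does not bound order-statistic moments directly; instead it follows Bahadur's own argument, invoking Lemmas~1 and~2 of \cite{Bahadur_1966} to control the remainder in the representation \eqref{Bahadur-rep}. Specifically, it introduces the random threshold $n_* = \sup\{n : K_n \ge \gamma_n \text{ or } Y_n \notin I_n\}$ (a.s.\ finite by Borel--Cantelli), and for each fixed $n$ splits $E[Y_n^2]$ over $\{n \le n_*\}$ versus $\{n > n_*\}$: on the latter event the Bahadur remainder is deterministically $O(n^{-3/4}\log n)$, while on the former $Y_n^2$ is crudely dominated by $\sum_{i\le n} X_i^2$ and Assumption~\ref{asu-q4} is invoked. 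Your route via the binomial tail identity $P(X_{(k)}>t)=P\bigl(\mathrm{Bin}(n,1-F(t))\ge n-k+1\bigr)$ and the estimate $\binom{n}{m}q^m \le (enq/m)^m \le Cq$ (valid once $q$ is below a threshold depending only on $p$, since $m/n$ stays bounded away from $0$ uniformly in $n$) is more elementary and self-contained; it uses only $E[X^2]<\infty$, matching the lemma's stated hypothesis, whereas the paper's path through Bahadur's lemmas implicitly leans on the smoothness Assumptions~\ref{asu-q1}--\ref{asu-q3} as well. The paper's approach, in turn, recycles machinery already in place for the representation and dovetails with the subsequent $E[\Delta_n^2]$ estimate. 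Your diagnosis that upgrading the a.s.\ bound on $R_n$ to an $L^2$ bound is circular without an independent moment input is exactly right, and your resolution via the direct order-statistic moment bound is the cleaner fix.
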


\begin{proof}
Just follow Bahadur's proof. Let
\begin{equation*}
G_n(x,\omega)=\left(F_n(x,\omega)-F_n(x_p,\omega)\right)-\left(F(x)-F(x_p)%
\right),
\end{equation*}
and let $I_n$ be an open interval $(x_p-a_n,x_p+a_n)$ with the constant $%
a_n\sim \log n/\sqrt{n}$ as $n\to\infty$. Define
\begin{equation*}
H_n(\omega)=\sup\left\{ |G_n(x,\omega)|:x\in I_n\right\}.
\end{equation*}

By Lemma 1 in \cite{Bahadur_1966}, $H_n(\omega)\le K_n(\omega)+\beta_n$ with
$\beta_n=O\left(n^{-3/4}\log n\right)$, $\sum_n\mathbb{P}\left(K_n\ge%
\gamma_n\right)<\infty$ and $\gamma_n=cn^{-3/4}\log n$. By Lemma 2 in \cite%
{Bahadur_1966} we have $Y_n\in I_n$ for sufficiently large $n$ w.p.1. Let $%
n_*=\sup_n\{K_n\ge\gamma_n~~\mbox{or}~~Y_n\notin I_n\}<\infty$, then for all
$n\ge1/p$
\begin{equation*}
E\left[Y_n^2\right]=E\left[Y_n^2I\left(n\le n_*\right)\right]+E\left[%
Y_n^2I\left(n>n_*\right)\right],
\end{equation*}
where
\begin{equation*}
E\left[Y_n^2I\left(n\le n_*\right)\right]\le E\left[\sum_{i=1}^nX_i^2I%
\left(n\le n_*\right)\right]\le n_*E\left[X^2\right]<\infty,
\end{equation*}
and
\begin{align*}
E\left[Y_n^2I\left(n>n_*\right)\right]&=E\left[\left(x_p+\frac{np-Z_n}{%
nf\left(x_p\right)}+R_n\right)^2I\left(n>n_*\right)\right] \\
&\le3x_p^2+\frac{3pq}{nf\left(x_p\right)^2}+3E\left[R_n^2I\left(n>n_*\right)%
\right] \\
&\le3x_p^2+\frac{3pq}{nf\left(x_p\right)^2}+3E\left[H_n^2I\left(n>n_*\right)%
\right] \\
&\le3x_p^2+\frac{3pq}{nf\left(x_p\right)^2}+6\gamma_n^2+6\beta_n^2<\infty.
\end{align*}

Combining these two parts together we can conclude $\sup_n E\left[Y_n^2%
\right]<\infty$.
\end{proof}

We let $Y_{2^{n+1}}$ denote the sample $p-$quantile of $\left(X_1,%
\cdots,X_{2^{n+1}}\right)$, let $Y^O_{2^n}$ denote the sample $p-$quantile
of the odd indexed sub-sample $\left(X^O_1,\cdots,X^O_{2^n}\right)$ and let $%
Y^E_{2^n}$ denote the sample $p-$quantile of the even indexed sub-sample $%
\left(X^E_1,\cdots,X^E_{2^n}\right)$. Then, define
\begin{equation}  \label{quantile-delta}
\Delta_n=Y_{2^{n+1}}-\frac{1}{2}\left(Y^O_{2^n}+Y^E_{2^n}\right).
\end{equation}
Let $n_b=\min\{n\in\mathbb{N}:n\ge1/p\}$. We let the geometrically
distributed random variable $N$ to take values on $\{n_b,n_b+1,\ldots\}$
with $p\left(n\right)=P \left(N=n\right)>0$ for all $n\ge n_b$. Define the
estimator to be
\begin{equation}  \label{quantile-estimator}
Z=\frac{\Delta_N}{p\left(N\right)}+Y_{2^{n_b}}.
\end{equation}

\begin{theorem}
\label{thm:quantile} If Assumption \ref{quantile-assumptions} are in force,
then $E\left[Z\right]=x_p$, $Var(Z)<\infty$ and the computation complexity
required to produce $Z$ is bounded in expectation.
\end{theorem}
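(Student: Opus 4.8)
The plan is to verify the three conditions of Assumption~\ref{general-principle-assumptions} for the increments $\Delta_n$ in (\ref{quantile-delta}) (with the running index shifted so as to start at $n_b$) and then quote the generic construction of Section~\ref{sec:general-principle}. Condition~\ref{asu-general3} is immediate, since producing $Y_{2^{n+1}}$, $Y^O_{2^n}$ and $Y^E_{2^n}$ consumes $2^{n+1}$ samples in all (the odd- and even-indexed subsamples being drawn from the same $2^{n+1}$ points), so $C_n=2^{n+1}=O(2^n)$, the cost of the order-statistic selection not entering the sampling-complexity count. For condition~\ref{asu-general2}, since $Y^O_{2^n}$ and $Y^E_{2^n}$ each have the law of the sample $p$-quantile $Y_{2^n}$ of $2^n$ iid draws, $E[\Delta_n]=E[Y_{2^{n+1}}]-E[Y_{2^n}]$, and the series telescopes to $\sum_{n\ge n_b}E[\Delta_n]=\lim_{M\to\infty}E[Y_{2^M}]-E[Y_{2^{n_b}}]$. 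From the Bahadur representation (\ref{Bahadur-rep}) one has $Y_n-x_p=(p-Z_n/n)/f_\mu(x_p)+R_n\to 0$ almost surely (the SLLN gives $Z_n/n\to F(x_p)=p$, and $R_n=O(n^{-3/4}\log n)\to 0$), while Lemma~\ref{quantile-ui} gives $\sup_nE[Y_n^2]<\infty$; hence $\{Y_{2^n}\}$ is uniformly integrable and $E[Y_{2^n}]\to x_p$, so the telescoped series equals $x_p-E[Y_{2^{n_b}}]$. Together with the additive base term $Y_{2^{n_b}}$ in (\ref{quantile-estimator}) this yields $E[Z]=x_p$, the interchange of sum and expectation being justified once condition~\ref{asu-general1} is in hand (it gives $\sum_nE|\Delta_n|\le\sum_n(E\Delta_n^2)^{1/2}<\infty$).

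The heart of the matter is condition~\ref{asu-general1}: $E[\Delta_n^2]=O(2^{-(1+\alpha)n})$ for some $\alpha>0$. The structural point is that the leading term of (\ref{Bahadur-rep}), namely $(np-Z_n)/(nf_\mu(x_p))=\big(p-\tfrac1n\sum_{i=1}^nI(X_i\le x_p)\big)/f_\mu(x_p)$, is \emph{linear in the empirical measure}; since the counts split additively, $Z_{2^{n+1}}=Z^O_{2^n}+Z^E_{2^n}$ (the counting analogue of $\mu_{2^{n+1}}=\tfrac12(\mu^O_{2^n}+\mu^E_{2^n})$), this term, together with the constant $x_p$, cancels exactly in $\Delta_n$. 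Thus
\begin{equation*}
\Delta_n=R_{2^{n+1}}-\tfrac12\left(R^O_{2^n}+R^E_{2^n}\right),
\end{equation*}
where $R^O_{2^n},R^E_{2^n}$ are the Bahadur remainders of the odd- and even-indexed half-samples. Since $R^O_{2^n}\stackrel{d}{=}R^E_{2^n}\stackrel{d}{=}R_{2^n}$, we get $E[\Delta_n^2]\le 3E[R_{2^{n+1}}^2]+\tfrac32E[R_{2^n}^2]$, and the problem reduces to an $L^2$ bound on the Bahadur remainder of the form $E[R_m^2]=O(m^{-3/2}(\log m)^2)$; with $m=2^n$ this is $O(2^{-3n/2}n^2)=O(2^{-(1+\alpha)n})$ for any $\alpha\in(0,1/2)$, e.g. $\alpha=1/4$.

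To prove $E[R_m^2]=O(m^{-3/2}(\log m)^2)$ I would refine the estimates in the proof of Lemma~\ref{quantile-ui}, splitting on whether $|Y_m-x_p|\le a_m$ with $a_m\asymp\log m/\sqrt m$. On $\{|Y_m-x_p|\le a_m\}$, the usual Bahadur expansion — using Assumptions~\ref{asu-q1}, \ref{asu-q2} (twice-differentiability and boundedness of $F''$ near $x_p$) and \ref{asu-q3} ($f_\mu(x_p)>0$) — gives $|R_m|\le f_\mu(x_p)^{-1}H_m+O(a_m^2)+O(1/m)$, with $H_m=\sup\{|G_m(x)|:|x-x_p|\le a_m\}$ as in Lemma~\ref{quantile-ui}; by Bahadur's Lemma~1 ($H_m\le K_m+\beta_m$, $\beta_m=O(m^{-3/4}\log m)$) together with the fact that on that interval $G_m$ is a centered empirical-process increment of maximal variance $O(a_m/m)$ over a monotone (VC) family, a standard maximal/Bernstein inequality upgrades the tail bound for $K_m$ to the $L^2$ bound $E[K_m^2]=O\!\left(\tfrac{a_m}{m}\log\tfrac{m}{a_m}\right)=O(m^{-3/2}(\log m)^2)$, whence $E[R_m^2I(|Y_m-x_p|\le a_m)]=O(m^{-3/2}(\log m)^2)$. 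On the complementary rare event, $|R_m|\le|Y_m|+|x_p|+f_\mu(x_p)^{-1}$, and $P(|Y_m-x_p|>a_m)$ is a moderate-deviation probability for the sample quantile: a Chernoff bound on the binomial $\sum_{i=1}^mI(X_i\le x_p\pm a_m)$, whose mean departs from $mp$ by $\asymp m a_m=\sqrt m\log m$, i.e. by $\asymp\log m$ standard deviations, gives $P(|Y_m-x_p|>a_m)\le\exp(-c(\log m)^2)$, smaller than any power of $m$; combined with a direct Chernoff tail estimate $P(|Y_m|>t)\le\exp(-mI(t))$ — whose rate $I(t)$ is controlled through $1-F$ so that $E[X^2]<\infty$ (Assumption~\ref{asu-q4}) makes $E[Y_m^2I(|Y_m-x_p|>a_m)]$ finite and $o(m^{-k})$ for every $k$ once $m$ is large — the rare-event contribution is negligible. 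This gives $E[R_m^2]=O(m^{-3/2}(\log m)^2)$, hence condition~\ref{asu-general1}, and the conclusions $E[Z]=x_p$, $\mathrm{Var}(Z)<\infty$ (using in addition $E[Y_{2^{n_b}}^2]<\infty$ from Lemma~\ref{quantile-ui} for the base term) and bounded expected cost follow from the computations in Section~\ref{sec:general-principle}.

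The step I expect to be the main obstacle is the rare-event term $E[R_m^2I(|Y_m-x_p|>a_m)]$: the event is super-polynomially improbable, but on it $R_m$ inherits only the two moments of $X$, so one cannot apply H\"older with a higher moment of $Y_m$; the bound must pair the $\exp(-c(\log m)^2)$ probability against a quantitative Chernoff-type (not merely Markov) tail estimate for the central order statistic $Y_m$ that remains summable under $E[X^2]<\infty$. The exact cancellation of the linear Bahadur term and the upgrade of Bahadur's Lemma~1 to an $L^2$ control of $K_m$ are the other two ingredients, but these are comparatively routine.
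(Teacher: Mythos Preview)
Your argument follows the paper's route exactly: the Bahadur representation makes $\Delta_n=R_{2^{n+1}}-\tfrac12(R^O_{2^n}+R^E_{2^n})$ once the constant $x_p$ and the linear term $(np-Z_n)/(nf_\mu(x_p))$ cancel via $Z_{2^{n+1}}=Z^O_{2^n}+Z^E_{2^n}$, and what remains is a bound on the remainder. The paper, however, simply invokes Bahadur's almost-sure rate $R_n=O(n^{-3/4}\log n)$, records $\Delta_n^2=O(n^2\cdot 2^{-3n/2})$ w.p.1, and then sums $E[\Delta_n^2]/p(n)$ without explaining how the almost-sure big-$O$ (whose implied constant is random) transfers to second moments. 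Your program of proving $E[R_m^2]=O(m^{-3/2}(\log m)^2)$ by an oscillation/maximal bound for $H_m$ on $\{|Y_m-x_p|\le a_m\}$ together with an order-statistic Chernoff bound on the complement is therefore an honest strengthening of precisely the step the paper leaves terse. The rare-event piece you flag as the obstacle is in fact benign for the reason you sketch: for $u\ge x_p+\delta$ the binomial Chernoff bound gives $P(Y_m>u)\le\exp\!\big(-m\,\mathrm{KL}(1{-}p\,\|\,1{-}F(u))\big)\le e^{-c_\delta m}$ uniformly in $u$, while the far-tail integral $\int_{M}^\infty u(1-F(u))^{j}\,du$ with $j\asymp m(1-p)\ge 1$ is finite under $E[X^2]<\infty$ and decays geometrically in $m$, so no moment of $Y_m$ beyond the second is needed.
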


\begin{proof}
We first show the unbiasedness of $Z$. Uniform integrability of $%
\{Y_{2^n}:n\ge n_b\}$ is established in Lemma \ref{quantile-ui} with
Assumption \ref{asu-q4} holds true, so we have
\begin{equation*}
E\left[Z\right]=\sum_{n=n_0}^{\infty}E\left[\Delta_n\right]+E\left[%
Y_{2^{n_0}}\right] =\lim_{n\to\infty} E\left[Y_{2^n}\right]= E\left[%
\lim_{n\to\infty}Y_{2^n}\right]=x_p.
\end{equation*}

We next show $Var(Z)<\infty$. With (\ref{Bahadur-rep}) we have
\begin{align*}
\Delta_n&=\left(x_p+\frac{2^{n+1}p-Z_{2^{n+1}}}{2^{n+1}f\left(x_p\right)}%
+R_{2^{n+1}}\right)-\frac{1}{2}\left[\left(x_p+\frac{2^np-Z^O_{2^n}}{2^n
f\left(x_p\right)}+R^O_{2^n}\right)+\left(x_p+\frac{2^np-Z^E_{2^n}}{2^n
f\left(x_p\right)}+R^E_{2^n}\right)\right] \\
&=R_{2^{n+1}}-\frac{1}{2}\left(R^O_{2^n}+R^E_{2^n}\right) \\
&=O\left(n\cdot2^{-3n/4}\right)~~~~w.p.1,
\end{align*}
thus
\begin{equation}  \label{quantile-delta-square}
\Delta_n^2=O\left(n^2\cdot2^{-3n/2}\right).
\end{equation}
If we choose $p(n)=r(1-r)^{n-n_b}$ with $r<1-\frac{1}{2\sqrt{2}}$ for $n\ge
n_b$, then
\begin{equation*}
E\left[\left|\frac{\Delta_N}{p(N)}\right|^2\right]=\sum_{n=n_b}^{\infty}%
\frac{E[\Delta_n^2]}{p(n)}<\infty,
\end{equation*}
hence $Var(Z)<\infty$.

Finally we show the computation cost of generating $\Delta_n$ is finite in
expectation. Each replication of $Z$ involves simulating $2^{N+1}$
independent copies of $X$. If we adopt the selection method based on random
partition introduced in \cite{BFPRT_1973}, then it will cost us $%
O\left(2^{N+1}\right)$ time to identify the sample $p-$quantiles $%
Y_{2^{N+1}} $, $Y^O_{2^N}$, and $Y^E_{2^N}$. Therefore by letting $N$ be an
independent geometrically distributed random variable with success parameter
$r\in\left(1/2,1-2^{-3/2}\right)$, $Z$ is an unbiased estimator of the true
unique $p$-quantile $x_p$ and it has finite work-normalized variance.
\end{proof}

\section*{Acknowledgement} Support from the NSF via grants DMS-1720451, 1838576, 1820942 and CMMI-1538217 is gratefully acknowledged.

\bibliographystyle{plain}
\bibliography{references}

\end{document}